\numberwithin{equation}{section}
\newtheorem{thm}{Theorem}[section]
\newtheorem{prop}{Proposition}[section]
\newtheorem{cor}{Corollary}[section]
\theoremstyle{definition}\newtheorem{definition}{Definition}[section]
\theoremstyle{definition}
\newtheorem{conj}{Conjecture}[section]
\newtheorem{rem}{Remark}[section]
\newtheorem{ex}{Example}[section]
\def \1{{\mathbf 1}}
\def \f{\frac}
\def \p{\mathfrak p}
\def \Re{\operatorname{Re}}
\def \l{\left}
\def \r{\right}
\def \C{{\mathbb C}}
\def \R{{\mathbb R}}
\def \F{{\mathbb F}}
\def \Q{{\mathbb Q}}
\def \Z{{\mathbb Z}}
\def\centering{%
   \expandafter\let\csname\@backslashchar\space\endcsname\@centercr
   \rightskip\@flushglue\leftskip\@flushglue
   \parindent\z@
   \parfillskip\z@skip}
\def\raggedright{%
   \expandafter\let\csname\@backslashchar\space\endcsname\@centercr
   \@rightskip\@flushglue \rightskip\@rightskip
   \leftskip\z@skip
   \parindent\z@}
\def\raggedleft{%
   \expandafter\let\csname\@backslashchar\space\endcsname\@centercr
   \rightskip\z@skip \leftskip\@flushglue
   \parindent\z@
   \parfillskip\z@skip}
\title{Chebyshev's Bias against Splitting and Principal Primes in Global Fields}
\author{Miho Aoki\thanks{ 
Department of Mathematics,
Interdisciplinary Faculty of Science and Engineering, Shimane University,
1060, Nishikawatsu, Matsue, Shimane, 690-8504, Japan \\
Email address: aoki@riko.shimane-u.ac.jp \\
ORCID iD: 0000-0001-5203-1763
 } and Shin-ya Koyama\thanks{
 Department of Biomedical Engineering, Toyo University, 2100 Kujirai, 
Kawagoe, Saitama, 350-8585, Japan \\
Email address: koyama@toyo.jp\\
ORCID iD: 0000-0002-8529-5244
 }   
\\ \\       
{ Dedicated to Professor Nobushige Kurokawa on his 70th birthday} 
 } 
\date{ }
\begin{document}
\maketitle
\begin{abstract}
Reasons for the emergence of Chebyshev's bias were investigated.
The Deep Riemann Hypothesis (DRH) enables us to
reveal that the bias is a natural phenomenon for achieving a well-balanced disposition of the whole sequence of primes,
in the sense that the Euler product converges at the center.
By means of a weighted counting function of primes, 
the authors succeed in expressing magnitudes of the deflection by a certain asymptotic formula
under the assumption of DRH,
which provides a new formulation of Chebyshev's bias.

For any Galois extension of global fields and for any element $\sigma$ in the Galois group, we have established
a criterion of the bias of primes whose Frobenius elements are equal to $\sigma$ under the assumption of DRH.
As an application we have obtained a bias toward non-splitting and non-principle primes in abelian extensions
under DRH. 
In positive characteristic cases, DRH is known, and all these results hold unconditionally.
\end{abstract}
\noindent
{\bf Mathematics Subject Classification}\quad  11A41, 11M41, 11N05, 11N13, 11R20, 11R37, 11R42, 11R44,  11R59

\section{Introduction}
Chebyshev's bias is the phenomenon that
there tend to be more primes of the form $4k+3$ than of the form $4k+1$ $(k\in\Z)$.
In fact, if denoting by $\pi(x;\,q,\,a)$ the number of primes $p\le x$ such that $p\equiv a\pmod q$,
then the inequality 
\begin{equation}\label{1}
\pi(x;\, 4,\, 3)\ge \pi(x;\, 4,\, 1)
\end{equation}
holds for any $x$ less than $26861$, 
which is the first prime number that violates the inequality \eqref{1}.
However, both sides draw equal at the next prime 26863, and $\pi(x;\, 4,\, 3)$ moves ahead again until
$616841$.
It is computed that more than 97\% of $x<10^{11}$ satisfy the inequality \eqref{1}.
In spite of that, Littlewood \cite{Li} proved that the difference
$\pi(x;\, 4,\, 3)-\pi(x;\, 4,\, 1)$ changes its sign infinitely many times. 
Moreover, Knapowski and Turan (1962) conjectured that the limit of the percentage in all positive numbers of the set
$$
A_X=\{x<X\ |\ \pi(x;\, 4,\, 3) \ge \pi(x;\, 4,\, 1)\}
$$
as $X\to\infty$ would be equal to 100\%.
However, it is proved now under GRH
that the limit does not exist and that the conjecture is false \cite{Kac}.
Fiorilli and Jouve \cite{FJ} have recently proved that there exist  infinitely many Galois extensions $L/K$ 
of number fields and associated conjugacy classes
$c_{\sigma}, c_{\tau} \, (\sigma ,\, \tau \in \mathrm{Gal}(L/K))$  of the same size such
that  the set
$$
A_X=\{x<X\ |\ \pi(x;\, \sigma)/ |c_{\sigma}| \geq \pi(x;\, \tau)|c_{\tau}|\}
$$
has natural density asymptotically equal to $1$, where $ \pi(x;\, \sigma)$ (resp.\,$ \pi(x;\, \tau)$) is 
the number of prime ideals of norm up to $x$ with Frobenius conjugacy class $c_{\sigma}$
(resp.\,$c_{\tau}$).

In place of such a naive density, the logarithmic density plays the role.
Define the \textit{logarithmic density} of the set $A_X$ in $[2,X]$ by
\begin{equation}\label{ld}
\delta(A_X)=\f1{\log X}\int_{t\in A_X}\f{dt}t.
\end{equation}
Rubinstein-Sarnak \cite{RS} proved that $\lim_{X\to\infty}\delta(A_X)=0.9959...$
under the assumption of the GRH and the 
GSH (Grand Simplicity Hypothesis) for the $L$-functions.

It is known by Dirichlet's prime number theorem in arithmetic progressions 
that the numbers of primes of the form $4k + 3$ and $4k + 1$ are to asymptotically equal. 
Then we have found from the following insight that
Chebyshev's bias means that the primes of the form $4k +3$ appear earlier than those of the form $4k + 1$.

For example, in the case that among the first 100 prime numbers, 50 of the first half are of the form $4k +3$
and those of the latter half are of the form $4k + 1$,
the inequality $\pi(x;\, 4,\, 3) \ge \pi(x;\, 4,\, 1)$ always holds in this interval 
even if their total number of elements are equal, and the maximum difference becomes 50. 
On the other hand, in the case that primes of the form $4k +3$ and $4k + 1$ appear alternately,
the maximum difference may be 1 even if the same inequality holds incessantly.
From this discussion, it is effective to apply the structure that 
``regards smaller primes as heavier elements", 
reflecting the magnitudes of the primes, so as to elucidate Chebyshev's bias. 
One of the reasons why the logarithmic density was effective would be that it treated the contribution of 
smaller numbers as greater ones by virtue of the factor $1/t$ in the integral in \eqref{ld}.

Therefore in this paper, we utilize the following weighted counting function
$$
\pi_s(x;\,q,\,a)=
\sum_{\genfrac{}{}{0pt}{1}{p<x:\,\text{prime}}{p\equiv a\pmod q}}\f{1}{p^s}\qquad(s\ge0).
$$
This is a generalization of the counting function $\pi(x;\,q,\,a)=\pi_0(x;\,q,\,a)$.
Here, small primes allow higher contribution to $\pi_s(x;\,q,\,a)$, as long as we fix $s>0$.
The function $\pi_s(x;\,q,\,a)$ $(s>0)$ should be more appropriate to represent the phenomenon,
as it reflects the size of primes which $\pi(x;\,q,\,a)$ has ignored.

Indeed, although the natural density of the set
$$
A(s)=\{x>0\ |\ \pi_s(x;\,4,\,3)-\pi_s(x;\,4,\,1)>0\}
$$
did not exist for $s=0$ under GRH, it is revealed in this paper under the assumption of
the Deep Riemann Hypothesis (DRH) that
it would exist and equal to 1 when $s=1/2$, that is,
$$
\lim_{X\to\infty}\f1X\int_{t\in A(1/2)\cap[2,X]}dt=1.
$$
More precisely we would reach under DRH that
\begin{equation}\label{cheby}
\pi_{\f12}(x;\,4,\,3)-\pi_{\f12}(x;\,4,\,1)
\sim\f12\log\log x\quad(x\to\infty),
\end{equation}
where $f(x)\sim g(x)$ $(x\to\infty)$ means that
$f(x)/g(x)\to1$ as $x\to\infty$.
The asymptotic \eqref{cheby} suggests a formulation of Chebyshev's bias.

This observation is derived from the conditional convergence of the 
Euler products of Dirichlet $L$-functions, which was first notice by Goldfeld \cite{G} in 1982,
and later named the Deep Riemann Hypothesis (DRH) by Kurokawa \cite{Ku}.
In the case of Dirichlet $L$-functions $L(s,\chi)$
for non-principal Dirichlet characters $\chi$, DRH states that
it holds on $\Re(s)=1/2$ that
\begin{multline}\label{DRH1}
\lim_{x \to \infty} \Bigg((\log x)^{m} \prod_{p \le x:\,\text{prime}} 
\l(1-\f{\chi(p)}{p^s}\r)^{-1} \Bigg)\\
 = \frac{L^{(m)}(s, \chi)}{e^{m \gamma} m!}
\times
\begin{cases}
\sqrt2 & (\chi^2=1,\ s=\frac12)\\
1 & (\text{otherwise})
\end{cases}
\end{multline}
with $\gamma$ the Euler constant and $m=m_\chi$ is the order of zero of $L(s,\chi)$ at $s$.
Taking the logarithm of \eqref{DRH1} for the case $\chi^2=1$ and $s=\f12$, the convergence of 
the above limit provides the following:
$$
\sum_{p\le x:\,\text{prime}}\f{\chi(p)}{\sqrt p}+\l(\f12+m\r)\log\log x=O(1)\quad(x\to\infty).
$$
In particular, if $q$ is an odd prime and $\chi$: $(\Z/q\Z)^\times\to\C^\times$ is defined by
$$
\chi(a)=\l(\f aq\r)
=\begin{cases}
1 & (a\in ((\Z/q\Z)^\times)^2)\\
-1& (\text{otherwise}),
\end{cases}
$$
then we would have
\begin{align*}
&
\sum_{\genfrac{}{}{0pt}{1}{b\in (\Z/q\Z)^\times}{\text{quadratic non-residue}}}
\pi_{\f12}(x;\,q,\,b)-
\sum_{\genfrac{}{}{0pt}{1}{a\in (\Z/q\Z)^\times}{\text{quadratic residue}}}
\pi_{\f12}(x;\,q,\,a)\\
&=
\sum_{\genfrac{}{}{0pt}{1}{p\le x:\,\text{prime}}{\text{quadratic non-residue}}}\f{1}{\sqrt p}
-\sum_{\genfrac{}{}{0pt}{1}{p\le x:\,\text{prime}}{\text{quadratic residue}}}\f{1}{\sqrt p}\\
&\sim
\l(\f12+m\r)\log\log x\quad(x\to\infty).
\end{align*}
A more precise result on the difference between individual
$\pi_{\f12}(x;\,q,\,b)$ and $\pi_{\f12}(x;\,q,\,a)$ is proved in \eqref{eq:difference} in \S3 under DRH.
Chebyshev's original case \eqref{cheby} is restored by choosing $(q,\, a,\, b)=(4,\, 1,\, 3)$ in Example \ref{original}.

\bigskip
Throughout this paper $K$ is supposed to be a global field.
Virtue of the formula \eqref{cheby} allows to reach a formulation of the Chebyshev bias of
primes $\p$ of $K$ as follows.
\begin{definition}
Let $a(\p)\in\R$ be a sequence over prime ideals $\p$ of $K$ such that
$$
\lim_{x\to\infty}\f{\#\{\mathfrak p\ |\ a(\p)>0,\ N(\p)\le x\}}
{\#\{\mathfrak p\ |\ a(\p)<0,\ N(\p)\le x\}}
=1.
$$
The sequence $a(\p)$ has a \textit{Chebyshev bias to being positive}, if
there exists $C>0$ such that
$$
\sum_{N(\p)\le x}\f{a(\p)}{\sqrt{N(\p)}}\sim C\log\log x\quad(x\to\infty),
$$
where $\p$ runs through primes of $K$.
On the other hand, $a(\p)$ is \textit{unbiased}, if
$$
\sum_{N(\p)\le x}\f{a(\p)}{\sqrt{N(\p)}}=O(1)\quad(x\to\infty).
$$
\end{definition}

\begin{definition}
Assume that the set of all primes $\p$ of $K$ with $N(\p)\le x$ 
is expressed as a disjoint union $P_1(x)\cup P_2(x)$ and that their proportion converges to
$$
\delta=\lim_{x\to\infty}\f{|P_1(x)|}{|P_2(x)|}.
$$
There exists a \textit{Chebyshev bias toward} $P_1$
(or \textit{Chebyshev bias against} $P_2$),  if the following asymptotic holds:
$$
\sum_{p\in P_1(x)}\f1{\sqrt{N(\p)}}-
\delta \sum_{p\in P_2(x)}\f1{\sqrt{N(\p)}}
\sim C\log\log x\quad(x\to\infty)
$$
for some $C>0$.
On the other hand, there exist \textit{no biases} between $P_1$ and $P_2$, if the following holds:
$$
\sum_{p\in P_1(x)}\f1{\sqrt{N(\p)}}-
\delta \sum_{p\in P_2(x)}\f1{\sqrt{N(\p)}}
=O(1)\quad(x\to\infty).
$$
\end{definition}

\bigskip
Denote an $n$-dimensional nontrivial irreducible 
Artin representation by
$$\rho:\ \mathrm{Gal}(K^{\mathrm{sep}}/K)\to \mathrm{Aut}_\C(V)\quad (\rho\ne\mathbf 1).$$
The $L$-function $L_K(s,\rho)$ is defined by the Euler product as
$$
L_K(s,\rho)=\prod_{\mathfrak p} 
\det\l(1-\rho(\mathrm{Frob}_{\mathfrak p} )N(\mathfrak p)^{-s} |V^{I_{\mathfrak p}}\r)^{-1}
\quad (\Re(s)>1),
$$
where $\mathfrak p$ runs through the prime ideals of $K$ with $N(\mathfrak p)$ its norm
and $\mathrm{Frob}_{\mathfrak p}\in\mathrm{Gal}(K^{\mathrm{sep}}/K)$ the Frobenius element
with $I_{\mathfrak p}$ the inertia group.

\begin{conj}[Deep Riemann Hypothesis (DRH)]\label{DRH}
Put $m=m_\rho=\mathrm{ord}_{s=\f12}L_K(s,\rho)$.
Then the limit 
\begin{multline}\label{limit}
\lim_{x \to \infty} \Bigg((\log x)^{m} \prod_{N(\mathfrak p) \le x} 
\det\l(1-\rho(\mathrm{Frob}_{\mathfrak p})N(\mathfrak p)^{-\f12} | V^{I_{\mathfrak p}}\r)^{-1}  \Bigg)
\end{multline}
satisfies the following:
\begin{description}
\item[DRH(A)] The limit \eqref{limit} exists and is nonzero.
\item[DRH(B)] The limit \eqref{limit} satisfies the following identity:
\begin{multline*}
\lim_{x \to \infty} \Bigg((\log x)^{m} \prod_{N(\mathfrak p) \le x} 
\det\l(1-\rho(\mathrm{Frob}_{\mathfrak p})N(\mathfrak p)^{-\f12} | V^{I_{\mathfrak p}}\r)^{-1} \Bigg)\\
 = \frac{\sqrt{2}^{\nu(\rho)} L_{K}^{(m)}(1/2, \rho)}{e^{m \gamma} m!},
\end{multline*}
where
$\nu(\rho) = \mathrm{mult}(\mathbf 1, \mathrm{sym}^{2} \rho)-\mathrm{mult}(\mathbf 1, \wedge^{2} \rho) \in \Z$
with $\mathrm{mult}(\mathbf 1, \sigma)$ 
being the multiplicity of the trivial representation $\mathbf 1$ in $\sigma$.
\end{description}
\end{conj}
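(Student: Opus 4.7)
The plan is to take the logarithm of the partial Euler product at $s=1/2$ and split it by prime-power index $k$. Setting $L_x := \prod_{N(\p)\le x}\det(1-\rho(\mathrm{Frob}_\p)N(\p)^{-1/2}|V^{I_\p})^{-1}$, we write $\log L_x=\sum_{k\ge1}\f1k S_k(x)$ where $S_k(x):=\sum_{N(\p)\le x}\tr(\rho(\mathrm{Frob}_\p)^k|V^{I_\p})/N(\p)^{k/2}$. The tail $k\ge3$ converges absolutely to a finite constant, since $\sum_\p N(\p)^{-3/2}<\infty$ and the traces are bounded by $\dim V$. The $k=2$ term is where the $\sqrt2^{\nu(\rho)}$ factor originates: invoking the virtual-character identity $\tr(\rho(g)^2)=\tr(\mathrm{sym}^2\rho(g))-\tr(\wedge^2\rho(g))$ together with the Mertens-type asymptotic
$$\sum_{N(\p)\le x}\f{\tr(\sigma(\mathrm{Frob}_\p))}{N(\p)}=\mathrm{mult}(\mathbf 1,\sigma)\log\log x+C_\sigma+o(1)$$
for any Artin representation $\sigma$, which follows by Brauer induction and partial summation from the known pole structure of $L_K(s,\sigma)$ at $s=1$, we would derive $\tfrac12 S_2(x)=\tfrac{\nu(\rho)}{2}\log\log x+\tfrac12 c_2(\rho)+o(1)$ for some explicit $c_2(\rho)$.

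The crux is $S_1(x)$. The aim is to establish $S_1(x)=-\l(m+\tfrac{\nu(\rho)}{2}\r)\log\log x+C_1+o(1)$ with a specific $C_1$ expressible through $L_K^{(m)}(1/2,\rho)$, the Euler constant $\gamma$, and the constants from the $k\ge2$ analysis: combining with the $k=2$ piece makes the $(\log x)^m$ prefactor cancel the remaining $\log\log x$ divergences, and exponentiation delivers DRH(A) together with the precise constant in DRH(B). In positive characteristic this step is tractable: $L_K(s,\rho)$ is a polynomial in $q^{-s}$ whose zeros lie on $\Re s=1/2$ by Weil's theorem, so $S_1(x)$ becomes a finite sum of oscillatory contributions of the zeros whose logarithmic mean furnishes the required cancellation, and one recovers DRH unconditionally as announced in the abstract.

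In characteristic zero, the $S_1$ step is the genuine obstruction, and I expect it to lie beyond the reach of currently available analytic techniques. The pointwise refinement of $S_1(x)+(m+\nu(\rho)/2)\log\log x$ to a specific constant is strictly stronger than GRH: under GRH one obtains bounds on $\sum_{N(\p)\le x}\tr(\rho(\mathrm{Frob}_\p))\log N(\p)$ of the order $\sqrt x(\log x)^2$, which do not sharpen after partial summation with weight $N(\p)^{-1/2}$ to the precision needed here. The Euler-product analysis above therefore reduces DRH to the conditional convergence statement for $S_1(x)$, which is essentially the analytic content of the conjecture itself; in the number field setting this reformulation appears to be the best the present approach can offer, and is why the subsequent theorems of the paper are stated conditionally on DRH.
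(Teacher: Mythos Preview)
The statement you were asked to prove is a \emph{conjecture}, and the paper does not attempt to prove it; it merely records (with references to Conrad and to Kaneko--Koyama--Kurokawa) that DRH is known when $\mathrm{char}(K)>0$, and otherwise assumes it as a hypothesis. So there is no proof in the paper to compare your proposal against.

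That said, your logarithmic decomposition $\log L_x=\sum_{k\ge1}\tfrac1k S_k(x)$, the absolute convergence of the $k\ge3$ tail, and the treatment of the $k=2$ term via the identity $\tr(\rho(g)^2)=\chi_{\mathrm{sym}^2\rho}(g)-\chi_{\wedge^2\rho}(g)$ together with the generalized Mertens theorem are precisely the ingredients the paper deploys in the proof of Proposition~\ref{prop:main}. There, however, the goal is not to prove DRH but to show that DRH(A) is \emph{equivalent} to a specific asymptotic for $S_1(x)$; your analysis reaches the same reduction and then correctly observes that controlling $S_1(x)$ to this precision in characteristic zero is the genuine content of the conjecture and lies beyond present methods. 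Your assessment that the positive-characteristic case goes through via the polynomial nature of $L_K(s,\rho)$ and Weil's theorem is consistent with what the paper cites, though the paper itself does not carry out those details either.

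In short: you have not proved the conjecture (nor does the paper), but your structural analysis matches the paper's own use of the same decomposition, and your diagnosis of where the difficulty lies is accurate.
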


\bigskip
The facter $\sqrt 2$ in the right hand side of DRH(B) was discovered by the pioneering work of Goldfeld \cite{G}.

Obviously DRH(B) implies DRH(A). 
Also note that if $m>0$, DRH(A) implies the following:
\begin{equation*}
\lim_{x \to \infty} \prod_{N(\mathfrak p) \le x} 
\det\l(1-\rho(\mathrm{Frob}_{\mathfrak p}) N(\mathfrak p)^{-\f12}|V^{I_{\mathfrak p}}\r)^{-1}
=L_K(1/2,\rho)=0.
\end{equation*}

Justification of DRH may depend on the pair $(K,\rho)$.
In what follows in this paper,
whenever DRH is assumed, the types of $(K,\rho)$ on which $L_K(s,\rho)$ should satisfy DRH are specified.
However, since $K$ is fixed throughout this paper, 
``DRH for $L_K(s,\rho)$'' is abbreviated to ``DRH for $\rho$'' in some cases.

\bigskip
Conjecture \ref{DRH} originates from Birch and Swinnerton-Dyer \cite[p.79 (A)]{BSD}.
They conjectured for the representation $\rho_E$ attached to an elliptic curve $E/\Q$ that
the Euler product of $L_\Q(s,\rho_E)$ should converge at the center,
and that in case the limit is zero, it asymptotically equals $C(\log x)^g$
with $g=\mathrm{rank}(E)$.
In the celebrated work \cite{G},
Goldfeld proved that their conjecture implies the Generalized Riemann Hypothesis (GRH) for $L_\Q(s,\rho_E)$
and that if their conjecture is correct, then $g=m$ holds.

It is also discovered by Conrad \cite[Theorems 3.3 and 6.3]{Co} that DRH implies both 
the convergence of the Euler product in $\Re(s)\ge 1/2$ 
and the GRH for $L_K(s,\rho)$.

When $\mathrm{char}(K)>0$, 
both DRH(A) and (B) are proved. 
The proof is substantially given by Conrad \cite[Theorems 8.1 and 8.2]{Co} in a different context
under the assumption of the second moment hypothesis,
and the full proof is recently given by Kaneko-Koyama-Kurokawa
\cite{KKK2}             
along the formulation of DRH.
The main theorems in this paper provide a criterion of emergence of the Chebyshev biases under the assumption of DRH,
which means the criterion holds unconditionally for $\mathrm{char}(K)>0$.

\bigskip
In this paper we examine Chebyshev biases existing in the primes of global fields.
Let $L/K$ be a finite Galois extension of global fields.
The main theorem is described in terms of the set $S$ of all primes in $K$ and
its subset $S_\sigma$ of unramified primes whose Frobenius element $\left( \frac{L/K}{\mathfrak p} \right)$
is equal to $\sigma\in\mathrm{Gal}(L/K)$:

\begin{thm}[a part of Theorem \ref{thm:main}]
Let $L/K$ be a finite Galois extension of global fields. 
The following (i) and (ii) are equivalent:
\begin{enumerate}[\textrm(i)]                                                                        
\item DRH(A) for all non trivial irreducible representations of Gal$(L/K)$.          
\item 
For any $\sigma \in $Gal$(L/K)$ it holds that
\begin{multline*}
\sum_{\mathfrak p\in S \atop N(\mathfrak p) \leq x } \frac{1}{\sqrt{N(\mathfrak p)}}
-\frac{[L:K]}{|c_{\sigma} | }
\sum_{\mathfrak p \in S_{\sigma} \atop N(\mathfrak p) \leq x} \frac{1}{\sqrt{N(\mathfrak p)}}
=C\log{\log{x}}+c+o(1)\\
(x\to \infty)
\end{multline*}
for some constants $C$ and $c$ depending on $\sigma$.
\end{enumerate}    
\end{thm}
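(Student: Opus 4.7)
The idea is to use Fourier analysis on the finite group $G=\mathrm{Gal}(L/K)$ as a bridge between data attached to individual irreducible representations (the content of DRH(A)) and data indexed by conjugacy classes (the content of (ii)). The key dictionary is the second orthogonality relation
$$\1_{c_\sigma}(\tau)=\frac{|c_\sigma|}{|G|}\sum_{\rho}\overline{\chi_\rho(\sigma)}\,\chi_\rho(\tau)\qquad(\tau\in G),$$
with $\rho$ running over all irreducible characters of $G$ and the trivial representation $\1$ playing a distinguished role.

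First I would recast DRH(A) for a nontrivial irreducible $\rho$ as a statement about a partial prime sum. Taking the logarithm in \eqref{limit} and expanding $-\log\det(1-A)=\sum_{k\ge 1}\mathrm{tr}(A^k)/k$, the $k=1$ piece produces $\sum_{N(\p)\le x}\chi_\rho(\mathrm{Frob}_\p)/\sqrt{N(\p)}$; the $k=2$ piece, via $\chi_\rho(g^2)=\chi_{\mathrm{sym}^2\rho}(g)-\chi_{\wedge^2\rho}(g)$ together with the classical Mertens-type asymptotic
$$\sum_{N(\p)\le x}\frac{\chi(\mathrm{Frob}_\p)}{N(\p)}=\mathrm{mult}(\1,\chi)\log\log x+M_\chi+o(1)$$
for Artin characters $\chi$ of $G$, contributes $(\nu(\rho)/2)\log\log x+\mathrm{const}+o(1)$; and the $k\ge 3$ tail is absolutely convergent. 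Hence DRH(A) for $\rho$ is equivalent to the asymptotic
$$\sum_{N(\p)\le x}\frac{\chi_\rho(\mathrm{Frob}_\p)}{\sqrt{N(\p)}}=-\l(m_\rho+\tfrac{\nu(\rho)}{2}\r)\log\log x+c_\rho+o(1) \qquad(\star)$$
for some constant $c_\rho$.

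For the forward direction (i)$\Rightarrow$(ii) I would apply the orthogonality relation with $\tau=\mathrm{Frob}_\p$, split off the contribution of the trivial representation (which produces the prefactor $|c_\sigma|/[L:K]$ multiplying $\sum_{\p\in S}1/\sqrt{N(\p)}$), and substitute $(\star)$ into each remaining $\rho\ne\1$ term, absorbing the contribution of the finitely many ramified primes into the constant. This yields (ii) with $C=\sum_{\rho\ne\1}\overline{\chi_\rho(\sigma)}(m_\rho+\nu(\rho)/2)$ and $c$ a corresponding linear combination of the $c_\rho$'s. For the converse (ii)$\Rightarrow$(i), I would fix a nontrivial irreducible $\rho$, multiply the asymptotic in (ii) by $\chi_\rho(\sigma)|c_\sigma|/|G|$, and sum over conjugacy class representatives $\sigma$. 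Because $\sum_\sigma \chi_\rho(\sigma)|c_\sigma|=\sum_{g\in G}\chi_\rho(g)=0$ for $\rho\ne\1$, the bulk term $\sum_{\p\in S}1/\sqrt{N(\p)}$ cancels, leaving precisely an asymptotic of the form $(\star)$ for that $\rho$, which is equivalent to DRH(A) for $\rho$ by the reduction above.

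The main obstacle is maintaining the sharp precision $C\log\log x+c+o(1)$ (rather than merely $O(1)$) on both sides of the equivalence. This requires a \emph{convergent}, not merely asymptotic, Mertens-type theorem at $s=1$ for $L_K(s,\mathrm{sym}^2\rho)$ and $L_K(s,\wedge^2\rho)$ so as to control the $k=2$ term past its leading $\log\log x$ contribution, and, in the reverse direction, an argument identifying the exponent recovered from (ii) with the true order of vanishing $m_\rho=\mathrm{ord}_{s=1/2}L_K(s,\rho)$; this last point is automatic since the partial Euler product times $(\log x)^{m'}$ can converge to a nonzero finite limit for at most one value of $m'$.
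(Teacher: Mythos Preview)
Your approach matches the paper's: the reduction of DRH(A) for a single $\rho$ to the prime-sum asymptotic $(\star)$ via the $k=1,2,\ge 3$ decomposition and the generalized Mertens theorem is exactly Proposition~\ref{prop:main}, and the passage between $(\star)$ for all $\rho\ne\1$ and (ii) for all $\sigma$ via character orthogonality is exactly how the paper proves Theorem~\ref{thm:main}.

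The only wrinkle is your final remark on (ii)$\Rightarrow$(i): uniqueness of the exponent $m'$ for which $(\log x)^{m'}\prod_{\p}(\cdots)^{-1}$ converges to a nonzero limit does not by itself force $m'=m_\rho$; uniqueness tells you there is at most one such value, not which one it is. The paper sidesteps this entirely by proving the sharper form of (ii) with the constant $C=M(\sigma)+m(\sigma)$ explicitly specified from the start, so that Fourier inversion (using $\sum_{\sigma\in G}\chi_\rho(\sigma)M(\sigma)=\tfrac{|G|}{2}\nu(\rho)$ and $\sum_{\sigma\in G}\chi_\rho(\sigma)m(\sigma)=|G|m_\rho$) yields the coefficient $-(\nu(\rho)/2+m_\rho)$ on the nose, and Proposition~\ref{prop:main} applies directly. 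If you insist on keeping $C$ unspecified you would need to invoke the Goldfeld--Conrad input that such convergence forces $m'$ to equal the true order of vanishing; alternatively, just track $C$ explicitly as the paper does.
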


Here $C$ is expressed in terms of $\nu(\rho)$ and $m_\rho$ in Conjecture \ref{DRH}.
Various examples of Chebyshev biases are obtained by calculating such constants for specific cases 
under the assumption of (i).
Some examples are presented at the end of this section.
Note that Chebyshev's bias is a problem beyond the Riemann Hypothesis,
since it is essentially equivalent to DRH(A).

The situation for the case of entire $L$-functions\footnote{
In case the $L$-functions have a pole at $s=1$, DRH needs modifications by Akatsuka \cite{A}.
} is illustrated in Figure 1.
Here we denote the Generalized Riemann Hypothesis (GRH) by just the Riemann Hypothesis (RH),
as our interest is whether the hypothesis is deep or not.
The Riemann Hypothesis is located in the upper left box by this notation, 
which follows from the upper middle box, the convergence of the Euler product in $\f12<\Re(s)<1$, 
since convergence of an infinite product implies by definition that the limit is nonzero.
Extension of the region of convergence to the boundary $\Re(s)=\f12$ is in the middle box, which is DRH(A).
The theme of this paper is the essential equivalence between Chebyshev's bias and DRH(A).
Although it is weaker than the full Deep Riemann Hypothesis (DRH(B)), 
it is still stronger than the Riemann Hypothesis.
For numerical evidence of DRH, see \cite[Table I]{KKK}.
\begin{figure}[H]   
\begin{center}
\includegraphics[scale=0.5]{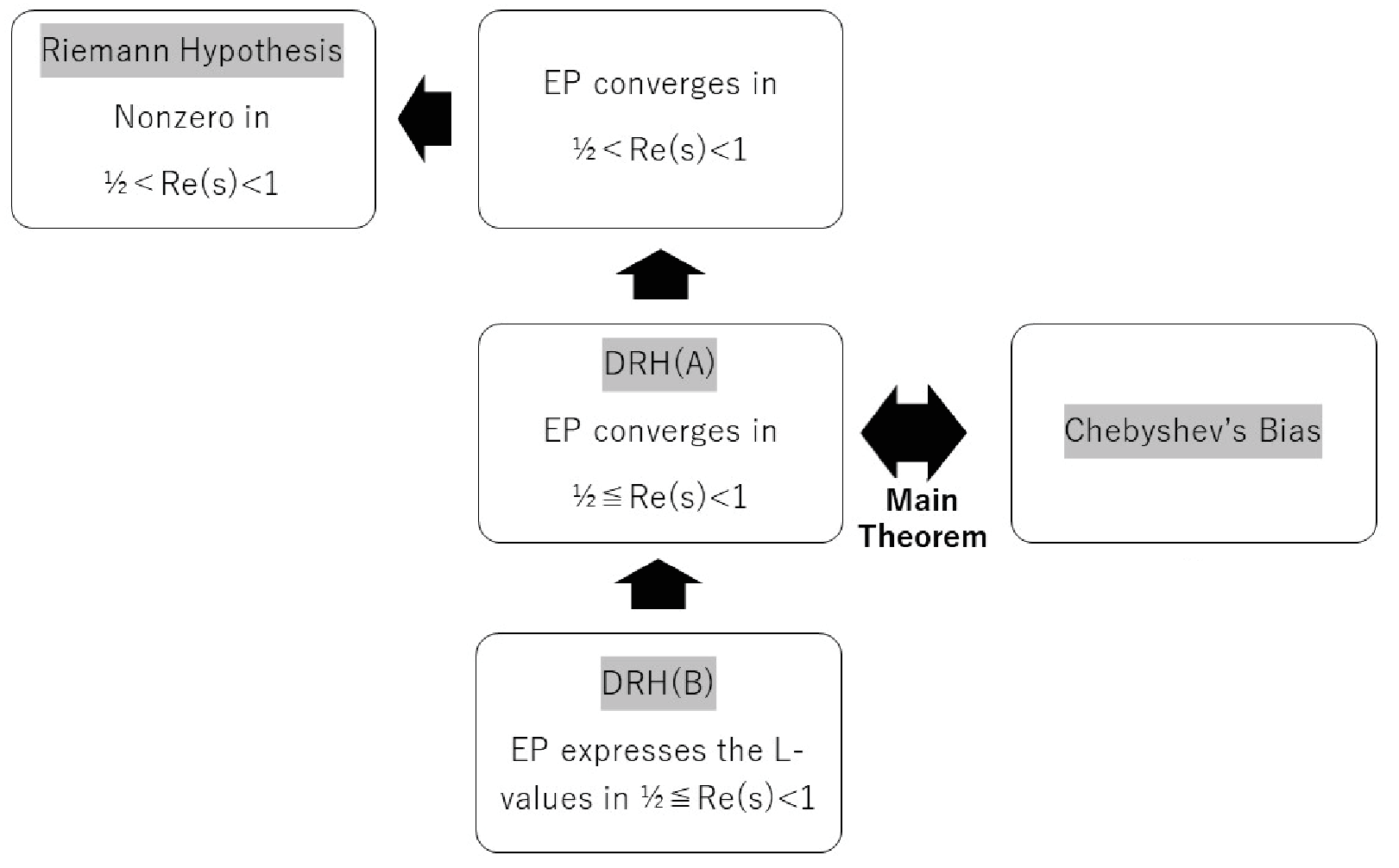}
\caption{Deep Riemann Hypothesis (DRH) and Chebyshev's Bias \newline\hfil (in case of entire $L$-functions)}
\end{center}
\end{figure}

This section is concluded by introducing three simple examples.

\begin{ex}[Bias against splitting primes (Example \ref{ex:decompose})]\label{ex:splitting}
Assume $[L:K]=2$ and let $\chi$ be the nontrivial character of Gal$(L/K)$.
The following (i) and (ii) are equivalent:
\begin{enumerate}[\textrm(i)]
\item DRH(A) for $L_K(s,\chi)$ holds.
\item There exists a Chebyshev bias against splitting primes with the asymptotic
\begin{multline*}
\sum_{\mathfrak p:\text{ nonsplit} \atop N(\mathfrak p) \leq x } \frac{1}{\sqrt{N(\mathfrak p)}}
-\sum_{\mathfrak p:\text{ split} \atop N(\mathfrak p) \leq x} \frac{1}{\sqrt{N(\mathfrak p)}}\\
=\left( \frac{1}{2} +m_{\chi} \right) \log{\log{x}} + c+o(1)
\qquad (x\to\infty)
\end{multline*}
for some constant $c$.
\end{enumerate}
\end{ex}

It is generally observed that real zeros of $L$-functions have an effect on increasing the bias
(For example, see \cite{Fi}).
Example 1.1 elucidates the reason for this phenomenon since the factor $\f12+m_\chi$
in the right hand side of (ii) becomes bigger as $m_\chi$ grows.

\begin{ex}[Bias against quadratic residues (Corollary \ref{cor:quadraticresidue})]\label{ex:residues}
Let $q$ be a positive integer, 
$t (q)$  the number of distinct prime numbers dividing $q$, and 
$$
t:= \begin{cases}
t (q) -1 & (2 ||q), \\
t (q) & (4||q \text{ or } 2\nmid q),\\
t (q)+1 & (8 |q).
\end{cases}
$$
Assume that $L(\f12,\chi)\ne0$ for all Dirichlet characters $\chi$ modulo $q$.
The following (i) and (ii) are equivalent:
\begin{enumerate}[\textrm(i)]
\item DRH(A) holds for $L(s,\chi)$ for any Dirichlet character $\chi$ modulo $q$.
\item There exists a Chebyshev bias against quadratic residues modulo $q$ with the asymptotic
\[
\pi_{\frac{1}{2}} (x;q,b) -\pi_{\frac{1}{2}}(x;q,a)=\frac{2^{t-1}}{\varphi (q) }\log{\log{x}} +c+o(1)
\quad (x\to \infty)
\]
for some constant $c$ and for any pair $(a,b)$ of a quadratic residue $a$ and a non-residue $b$,
and there exist no biases for any pair $(a,b)$ with both $a$ and $b$ quadratic residues (or non-residues).
\end{enumerate}
\end{ex}

\begin{ex}[Bias against principal ideals (Corollary \ref{cor:principal})]\label{ex:principal}
We denote by $\widetilde{K}$ the Hilbert class field of $K$.
The ideal class group is expressed as $\mathrm{Cl}_K \simeq \mathrm{Gal} (\widetilde{K}/K)$.
An ideal class $[\mathfrak a] \in \mathrm{Cl}_K$ corresponds to $\sigma_{\mathfrak a}:=\left( \frac{\widetilde{K}/K}{\mathfrak a} \right) \in \mathrm{Gal}(\widetilde{K}/K)$. 
Assume DRH(A) for $L_K(s,\chi)$ and that $L_K(\f12,\chi)\ne 0$ 
for any character $\chi$ of $\mathrm{Cl}_K$.
If $|\mathrm{Cl}_K|$ is even, then in the whole set of prime ideals of $K$,
there exists a Chebyshev bias against principal ideals. 
Namely,
\begin{multline*}
\sum_{\mathfrak p:\text{ nonprincipal}\atop N(\mathfrak p ) \leq x} \frac{1}{\sqrt{N(\mathfrak p)}} 
-(h_K-1) \sum_{\mathfrak p:\text{ principal} \atop N(\mathfrak p)\leq x } \frac{1}{\sqrt{N(\mathfrak p)}} \\
=\frac{ |\mathrm{Cl}_K/\mathrm{Cl}_K^2| -1}{2} \log{\log{x}}  +c+o(1)\quad (x \to \infty)
\end{multline*}
with $h_K=|\mathrm{Cl}_K|$.
\end{ex}

\section{Bias of primes in global fields}

Let $K$ be a global field, that is, a number field or an algebraic function field of one variable over
a finite field $\mathbb F_q$ ($q$ is a power of a prime number), and $L/K$ a finite 
Galois extension.
For a prime ideal $\mathfrak p$ 
of $K$
which does not divide the relative discriminant $D_{L/K}$,
we denote by $\left( \frac{L/K}{\mathfrak p} \right)$ the Frobenius element of Gal$(L/K)$.
For any $\sigma \in $Gal$(L/K)$, it is known from the Chebotarev density theorem
that the density of prime ideals $\mathfrak p$ of $K$ satisfying $\mathfrak p \nmid D_{L/K}$ and
$\left(\frac{L/K}{\mathfrak p} \right) 
\sim \, 
\sigma$ is $|c_{\sigma} |/[L:K]$, where $c_{\sigma}$ is 
the conjugacy class of $\sigma$
(See \cite[p125, Theorem~9.13~A]{R2} for function fields).
Let $S$ be the set of all prime ideals of $K$ and put
$S_{\sigma} := \left\{ \mathfrak p \in S \ \vline \ \mathfrak p \nmid D_{L/K},\ \left(\frac{L/K}{\mathfrak p} \right)
\sim \, 
\sigma \right\}$
for $\sigma \in $Gal$(L/K)$.
\begin{thm}[Chebotarev density theorem]\label{theo:chebotarev}
For any  finite Galois  extension $L/K$  of global fields and $\sigma \in \mathrm{Gal}(L/K)$, 
we have
\[
\lim_{s\to 1^+} \frac{\sum_{\mathfrak p \in S_{\sigma}} N (\mathfrak p)^{-s} }{\sum_{\mathfrak p\in 
S
}N(\mathfrak p)^{-s}} =
\frac{|c_{\sigma}|}{[L:K]}.
\]
\end{thm}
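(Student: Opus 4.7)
The plan is to reduce to the analytic behavior of Artin $L$-functions at $s=1$ via character orthogonality. Setting $G=\mathrm{Gal}(L/K)$, for any $\tau\in G$ the orthogonality relations give
\[
\mathbf 1_{c_\sigma}(\tau)=\f{|c_\sigma|}{|G|}\sum_{\chi}\overline{\chi(\sigma)}\,\chi(\tau),
\]
where the sum runs over irreducible characters of $G$. Applying this with $\tau=\mathrm{Frob}_\p$ for unramified $\p$, I rewrite
\[
\sum_{\p\in S_\sigma}N(\p)^{-s}=\f{|c_\sigma|}{|G|}\sum_{\chi}\overline{\chi(\sigma)}\sum_{\p\nmid D_{L/K}}\chi(\mathrm{Frob}_\p)N(\p)^{-s}.
\]
The finitely many ramified primes contribute $O(1)$ uniformly and are absorbed throughout.

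Next I connect the inner sum to the logarithm of the Artin $L$-function. Taking $\log$ of the Euler product, for $\Re(s)>1$,
\[
\log L_K(s,\chi)=\sum_{\p\nmid D_{L/K}}\chi(\mathrm{Frob}_\p)N(\p)^{-s}+R_\chi(s),
\]
where $R_\chi(s)$ collects the contributions of prime powers $N(\p)^{-ks}$ with $k\ge 2$, which converge absolutely for $\Re(s)>1/2$ and hence stay bounded as $s\to 1^+$. For the trivial character, $L_K(s,\mathbf 1)=\zeta_K(s)$ has a simple pole at $s=1$, so $\log L_K(s,\mathbf 1)=\log\f1{s-1}+O(1)$; for any nontrivial irreducible $\chi$, the crucial input is that $L_K(s,\chi)$ is holomorphic and nonzero at $s=1$, yielding $\log L_K(s,\chi)=O(1)$. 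Substituting,
\[
\sum_{\p\in S_\sigma}N(\p)^{-s}=\f{|c_\sigma|}{|G|}\log\f1{s-1}+O(1),
\]
and, by the same reasoning applied to the whole set of primes (equivalently, to the trivial representation alone), $\sum_{\p\in S}N(\p)^{-s}=\log\f1{s-1}+O(1)$. Dividing and letting $s\to 1^+$ gives the claimed limit $|c_\sigma|/[L:K]$.

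The main obstacle is the nonvanishing $L_K(1,\chi)\ne 0$ for each nontrivial irreducible Artin character $\chi$. For $1$-dimensional $\chi$ this is the classical nonvanishing of Hecke $L$-functions at $s=1$. For higher-dimensional $\chi$ one applies Brauer's induction theorem to write $\chi$ as a $\Z$-linear combination of characters induced from $1$-dimensional characters of subgroups $H_i\le G$, which factors $L_K(s,\chi)$ into Hecke $L$-functions over the intermediate fields $L^{H_i}$ raised to integer exponents; combining with the identity $\zeta_L(s)=\prod_{\chi}L_K(s,\chi)^{\chi(1)}$ and the fact that $\zeta_L$ has only a simple pole at $s=1$ forces every nontrivial factor to be holomorphic and nonzero there. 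In the function field case the same skeleton applies in Weil's framework, where the $L$-functions are polynomials in $q^{-s}$ and the Riemann hypothesis is known, as in \cite{R2}.
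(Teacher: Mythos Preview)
The paper does not actually prove Theorem~\ref{theo:chebotarev}; it is stated there as the classical Chebotarev density theorem, introduced as background with a pointer to \cite{R2} for the function field case, and no proof environment follows the statement. Your sketch is the standard analytic proof via orthogonality of characters and the behavior of Artin $L$-functions at $s=1$, and it is correct in outline: the second orthogonality relation, the decomposition of $\log L_K(s,\chi)$ into the $k=1$ term plus an absolutely convergent remainder for $\Re(s)>1/2$, the simple pole of $\zeta_K$, and the holomorphy and nonvanishing of $L_K(s,\chi)$ at $s=1$ for nontrivial irreducible $\chi$ (handled, as you say, by Brauer induction combined with the Hecke nonvanishing and the factorization $\zeta_L=\prod_\chi L_K(s,\chi)^{\chi(1)}$). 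Since the paper offers no proof to compare against, there is no methodological divergence to discuss; your argument is simply the textbook route the authors are implicitly invoking.
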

We will examine biases of prime ideals toward or against the set $S_{\sigma}$ depending on $\sigma$,
under the assumption of DRH(A) in the case of number fields and unconditionally in the case of algebraic function fields.
For $x\in \R_{>0}$, we put
\begin{align*}
\pi_{s,K}(x) & := \sum_{\mathfrak p\in S \atop N(\mathfrak p) \leq x } \frac{1}{N(\mathfrak p)^s},\\
\pi_{s}(x;\sigma) & := \sum_{\mathfrak p \in S_{\sigma} \atop N(\mathfrak p) \leq x} \frac{1}{N(\mathfrak p)^s},
\end{align*}
for any $\sigma \in $Gal$(L/K)$ and
$$
\pi_{s}(x;H) := \sum_{\sigma \in H} \pi_{s}(x;\sigma)
$$
for any $H\subset \mathrm{Gal}(L/K)$.
\begin{thm}\label{thm:main}
Let $L/K$ be a finite Galois extension of global fields. 
For any $\sigma \in $Gal$(L/K)$ we put
$$
M(\sigma ) :=\frac{1}{2} \sum_{\rho \ne {\textbf 1} \atop \text{irred.} } \chi_{\rho} (\sigma) \nu (\rho) 
$$
with $\chi_{\rho}$ being the character of $\rho$ and 
$$
m(\sigma)  := \sum_{\rho \ne {\textbf 1} \atop \text{irred. }} \chi_{\rho} (\sigma )m_{\rho},
$$
where $\rho$ runs through all nontrivial irreducible representations of Gal$(L/K)$.
The following (i),(ii) and (iii) are equivalent:                                     
\begin{enumerate}[\textrm(i)]                                                                        
\item DRH(A) for all non trivial irreducible representations of Gal$(L/K)$.          
\item For all $\sigma \in $Gal$(L/K)$ it holds that                                     
\begin{multline*}
\pi_{\frac{1}{2},K }(x) -\frac{[L:K]}{|c_{\sigma} | } \pi_{\frac{1}{2} }(x;\sigma) =(M(\sigma )+m(\sigma )) \log{\log{x}} +c+o(1)\\
(x\to \infty)
\end{multline*}
for some constant $c$.          
\item For all pairs $\sigma, \, \tau \in $Gal$(L/K)$ it holds that 
\begin{multline*}
\frac{1}{|c_{\tau}|} 
\pi_{\frac{1}{2} } (x;\tau )-
\frac{1}{ |c_{\sigma}|} 
\pi_{\frac{1}{2}} (x;\sigma) \\
=\frac{1}{[L:K]} \left( M(\sigma) -M(\tau) +m(\sigma)-m(\tau )\right) \log{\log{x}} +c+o(1)\\
(x\to \infty)
\end{multline*}
for some constant $c$.  
\end{enumerate} 
\end{thm}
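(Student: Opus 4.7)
The plan is to route all three conditions through the representation-level sums $T_\rho(x) := \sum_{N(\p)\le x}\chi_\rho(\mathrm{Frob}_\p)/\sqrt{N(\p)}$ and the single pivot asymptotic
\[
T_\rho(x) = -\l(m_\rho + \f{\nu(\rho)}{2}\r)\log\log x + c_\rho + o(1) \qquad (x\to\infty)
\]
for every nontrivial irreducible $\rho$; each of (i), (ii), (iii) will turn out to be equivalent to the simultaneous validity of these pivots, after which the theorem is character-orthogonality bookkeeping. To identify DRH(A) for $\rho$ with its pivot I would take the logarithm of the partial product in \eqref{limit} and expand $-\log\det(1-\rho(\mathrm{Frob}_\p)N(\p)^{-1/2}|V^{I_\p})^{-1}=\sum_{k\ge 1}\chi_\rho(\mathrm{Frob}_\p^k)/(kN(\p)^{k/2})$, splitting into $k=1,\,2,\,\ge 3$. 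The $k\ge 3$ tail is $O(1)$ by absolute convergence; the $k=2$ piece, via $\chi_\rho(g^2)=\chi_{\mathrm{sym}^2\rho}(g)-\chi_{\wedge^2\rho}(g)$ and the (unconditional) Mertens-type asymptotic $\sum_{N(\p)\le x}\chi_\pi(\mathrm{Frob}_\p)/N(\p)=\mathrm{mult}(\1,\pi)\log\log x+c_\pi+o(1)$ applied to $\pi=\mathrm{sym}^2\rho$ and $\pi=\wedge^2\rho$, equals $(\nu(\rho)/2)\log\log x+c+o(1)$. Since DRH(A) asserts that the whole logarithm is $-m_\rho\log\log x+c'+o(1)$, the $k=1$ piece $T_\rho(x)$ must satisfy the displayed pivot, and exponentiation reverses the derivation.

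For (i) $\Rightarrow$ (ii) I would invoke column orthogonality in the form $1-([L:K]/|c_\sigma|)\1_{g\sim\sigma}=-\sum_{\rho\ne\1}\chi_\rho(g)\overline{\chi_\rho(\sigma)}$. Summing against $1/\sqrt{N(\p)}$ for $N(\p)\le x$ and absorbing the finitely many ramified primes and the local inertia-invariants $V^{I_\p}$ into $O(1)$ yields
\[
\pi_{\f12,K}(x)-\f{[L:K]}{|c_\sigma|}\pi_{\f12}(x;\sigma) = -\sum_{\rho\ne\1}\overline{\chi_\rho(\sigma)}\,T_\rho(x)+O(1).
\]
Substituting the pivot and pairing $\rho$ with $\rho^*$, and using $m_{\rho^*}=m_\rho$ and $\nu(\rho^*)=\nu(\rho)$ so that $\overline{\chi_\rho(\sigma)}=\chi_{\rho^*}(\sigma)$ can be traded for $\chi_\rho(\sigma)$ inside the sum, produces the coefficient $M(\sigma)+m(\sigma)$ promised by (ii).

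The equivalence (ii) $\Leftrightarrow$ (iii) is then immediate: subtracting the (ii) asymptotics for $\sigma$ and $\tau$ and dividing by $[L:K]$ gives (iii), while the identity
\[
\pi_{\f12,K}(x)-\f{[L:K]}{|c_\sigma|}\pi_{\f12}(x;\sigma)=\sum_{c}|c|\l(\f{\pi_{\f12}(x;\sigma_c)}{|c|}-\f{\pi_{\f12}(x;\sigma)}{|c_\sigma|}\r)+O(1)
\]
(sum over conjugacy classes $c$ with chosen representatives $\sigma_c$) presents (ii) as an integer-weighted sum of the pairwise differences appearing in (iii), so (iii) $\Rightarrow$ (ii). Finally, for (ii) $\Rightarrow$ (i) I would multiply the (ii) asymptotic by $(|c_\sigma|/[L:K])\chi_\pi(\sigma)$ and sum over conjugacy classes; row orthogonality isolates a single $T_\pi(x)$ with its pivot, and the reverse of the first step then delivers DRH(A) for $\pi$.

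The hardest step will be securing the $o(1)$-sharpening of the Mertens-type input $\sum_{N(\p)\le x}\chi_\pi(\mathrm{Frob}_\p)/N(\p)$. An $O(1)$-form is elementary from Chebotarev, but it would only permit reversing the first step up to boundedness of the partial Euler product away from $0$ and $\infty$, whereas DRH(A) as stated demands that the limit itself exist. Establishing the $o(1)$ remainder (for instance by Brauer-inducing to Hecke $L$-functions and applying a Tauberian argument on $\Re(s)=1$) for every irreducible constituent of $\mathrm{sym}^2\rho$ and $\wedge^2\rho$ is therefore the analytic core of the converse direction. The remaining points -- accounting for ramified primes, the inertia-invariants $V^{I_\p}$, and the compatibilities $m_{\rho^*}=m_\rho$ and $\nu(\rho^*)=\nu(\rho)$ -- are finite or routine.
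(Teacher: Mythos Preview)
Your proposal is correct and follows essentially the same route as the paper: Proposition~\ref{prop:main} establishes exactly your pivot asymptotic via the $k=1,\,2,\,\ge 3$ split of the logarithm, and the paper then runs the same column/row orthogonality arguments for (i)$\Leftrightarrow$(ii) and invokes $\sum_{\sigma\in G} M(\sigma)=\sum_{\sigma\in G} m(\sigma)=0$ for (ii)$\Leftrightarrow$(iii), which is precisely what your conjugacy-class identity unpacks. The Mertens-type input you flag as the hardest step is not reproved but simply quoted from Rosen~\cite[Theorem~5]{R}, which already supplies the $o(1)$ remainder you need, so no additional Tauberian or Brauer-induction work is required.
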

\begin{rem}
For any irreducible representation $\rho$, the complex conjugate representation $\overline{\rho}$ is irreducible and
we obtain $\nu (\rho)=\nu (\overline{\rho}), \ m_{\rho} =m_{\overline{\rho}}$ and that $\chi_{\overline{\rho}} (\sigma )
=\overline{ \chi_{\rho} (\sigma)}$. Therefore $M(\sigma ), m(\sigma) \in \mathbb R$ holds.
\end{rem}
\begin{ex}[Quaternion extension]\label{ex:Quaternion}
Let $L/\mathbb Q$ be a Galois extension with Gal$(L/\mathbb Q)$ isomorphic to the
quaternion group $Q_8 =\{ \pm1, \pm i, \pm j , \pm k \}$. There are five irreducible representations
of Gal$(L/\mathbb Q)$. We denote the non-trivial $1$-dimensional representations by $\chi_1, \chi_2 $
and $\chi_3$, and the $2$-dimensional representation by $\rho$. In this case, $\nu (\chi_1) =\nu (\chi_2)=
\nu(\chi_3)=1$ and $\nu (\rho)=-1$ are valid. If assuming $m_{\chi_1}=m_{\chi_2}=m_{\chi_3}=0$, then the
coefficients $M(\sigma )$ and $m(\sigma)$ are given as follows.
\begin{center}
\begin{tabular}{c|ccccc}
$\sigma$ & $1$ & $-1$ & $\pm i$ & $\pm j$ & $\pm k$  \\ \hline  
$M(\sigma)$ & $1/2 $ & $5/2$ & $-1/2$ & $-1/2$ & $-1/2$  \\ 
$m(\sigma)$ & $2m_{\rho}$ & $-2m_{\rho}$ & $0$ & $0 $ & $0$  \\
 \end{tabular}
 \end{center}

 Fr\"{o}hlich \cite{F} 
proved that there exist infinitely many quaternion extensions $L/\mathbb Q$
 satisfying $m_{\rho} \ne 0$.
\end{ex}
To prove the theorem, we show the following proposition.
\begin{prop}\label{prop:main}
Let $K$ be a global field and $\rho$ a nontrivial irreducible Artin representation of 
Gal$(K^{\mathrm{sep}}/K)$.  
The following (i) and (ii) are equivalent.
\begin{enumerate}[(i)]
\item DRH(A) for $\rho$.
\item There exists a constant $c$ such that
\begin{equation*}
\sum_{N(\mathfrak p )\leq x } \frac{ \chi_{\rho} (\mathrm{Frob}_{\mathfrak p} ) }{N(\mathfrak p)^{\frac{1}{2} }}
=- \left( \frac{\nu (\rho) }{2} +m_{\rho} \right) \log{\log{x}} +c+o(1)\quad(x\to\infty).
\end{equation*}
\end{enumerate}
\end{prop}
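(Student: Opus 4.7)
The plan is to expand the logarithm of the truncated Euler product in \eqref{limit} as a Dirichlet series and compare it term-by-term with the sum in (ii). Using the formal identity $-\log\det(1-T\,|\,W)=\sum_{k\ge 1}\mathrm{tr}(T^k\,|\,W)/k$, the logarithm of the product in \eqref{limit} equals
\begin{equation*}
\sum_{k\ge 1}\frac{1}{k}\sum_{N(\mathfrak p)\le x}\frac{\mathrm{tr}\bigl(\rho(\mathrm{Frob}_\mathfrak p)^k\,|\,V^{I_\mathfrak p}\bigr)}{N(\mathfrak p)^{k/2}}.
\end{equation*}
I would split this according to $k=1$, $k=2$, and $k\ge 3$. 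The tail $k\ge 3$ is absolutely convergent (the traces are bounded by $\dim\rho$ and $\sum_\mathfrak p N(\mathfrak p)^{-3/2}<\infty$) and so contributes $O(1)$; the contribution of the finitely many ramified primes to all $k$ is likewise $O(1)$. The $k=1$ part is exactly the sum on the left of (ii), up to such bounded errors.

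The crux is the $k=2$ contribution. Using $\chi_\rho(g^2)=\chi_{\mathrm{sym}^2\rho}(g)-\chi_{\wedge^2\rho}(g)$, this becomes
\begin{equation*}
\frac{1}{2}\sum_{N(\mathfrak p)\le x}\frac{\chi_{\mathrm{sym}^2\rho}(\mathrm{Frob}_\mathfrak p)-\chi_{\wedge^2\rho}(\mathrm{Frob}_\mathfrak p)}{N(\mathfrak p)}.
\end{equation*}
I would then invoke the Mertens-type theorem for Artin $L$-functions: for any virtual character $\psi$ of $\mathrm{Gal}(K^{\mathrm{sep}}/K)$,
\begin{equation*}
\sum_{N(\mathfrak p)\le x}\frac{\psi(\mathrm{Frob}_\mathfrak p)}{N(\mathfrak p)}=\mathrm{mult}(\mathbf 1,\psi)\log\log x+O(1).
\end{equation*}
This follows because $L_K(s,\psi)$ has a pole at $s=1$ of order $\mathrm{mult}(\mathbf 1,\psi)$ and is holomorphic and non-vanishing elsewhere on $\Re(s)=1$ (via Brauer induction together with the non-vanishing of Hecke $L$-functions in the number field case, and directly in the function field case), combined with a standard partial-summation argument. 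Applied to $\psi=\mathrm{sym}^2\rho-\wedge^2\rho$, the $k=2$ piece equals $\tfrac{\nu(\rho)}{2}\log\log x+O(1)$.

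Assembling everything, DRH(A) is, after taking logarithms, equivalent to the existence of a finite limit for $m_\rho\log\log x+\log(\text{product})$, which by the decomposition above translates exactly into
\begin{equation*}
\sum_{N(\mathfrak p)\le x}\frac{\chi_\rho(\mathrm{Frob}_\mathfrak p)}{N(\mathfrak p)^{1/2}}+\Bigl(\frac{\nu(\rho)}{2}+m_\rho\Bigr)\log\log x=c+o(1),
\end{equation*}
and the reverse implication is obtained by reversing each step. The main obstacle is the $k=2$ Mertens estimate with a sharp $O(1)$ remainder: a weaker statement with a $o(\log\log x)$ error term would be insufficient to conclude existence and non-vanishing of the limit in DRH(A), so the partial-summation/Tauberian argument must exploit the full analytic picture of $L_K(s,\psi)$ on and to the right of $\Re(s)=1$ rather than merely its abscissa of convergence.
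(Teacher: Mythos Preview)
Your approach is essentially identical to the paper's: take logarithms of \eqref{limit}, split the resulting double sum by $k=1$, $k=2$, $k\ge 3$, dispose of $k\ge 3$ by absolute convergence, and handle $k=2$ via the identity $\chi_\rho(g^2)=\chi_{\mathrm{sym}^2\rho}(g)-\chi_{\wedge^2\rho}(g)$ together with a generalized Mertens theorem (the paper cites Rosen's version \cite{R} directly rather than sketching a proof).

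One precision to fix: the Mertens estimate you state, with remainder $O(1)$, is not strong enough for either direction. DRH(A) asserts that a certain limit \emph{exists} (and is nonzero), so after taking logarithms you need $m_\rho\log\log x+(\ast)$ to converge to a constant, not merely to be bounded. Hence the $k=2$ piece must be $\tfrac{\nu(\rho)}{2}\log\log x+C_2+o(1)$ for an actual constant $C_2$, as in the paper's \eqref{C2}; a bare $O(1)$ would only yield boundedness of the product, not convergence. Rosen's theorem does deliver this sharper $c_\tau+o(1)$ form, so the fix is just to quote the stronger statement.
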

\begin{proof}
By taking the logarithm of \eqref{limit}, DRH(A) for $\rho$ is equivalent to the existence of a constant $L$ such that
\begin{equation}\label{eq:log}
m_{\rho} \log{\log{x}}+ 
 \underbrace{  
\sum_{N(\mathfrak p)\leq x} \sum_{k=1}^{\infty} \frac{ \chi_{\rho}(\mathrm{Frob}^k_{\mathfrak p})}{kN(\mathfrak p)^{\frac{k}{2}}}
}_{(\ast)}
=L+o(1) \quad(x\to\infty).
\end{equation}
The double sum $(*)$ is decomposed into three subseries:
\begin{equation}\label{eq:ast}
(\ast)=\sum_{N(\mathfrak p)\leq x}  \frac{ \chi_{\rho}(\mathrm{Frob}_{\mathfrak p})}{N(\mathfrak p)^{\frac{1}{2}}}
+
 \sum_{N(\mathfrak p)\leq x}  \frac{ \chi_{\rho}(\mathrm{Frob}^2_{\mathfrak p})}{2N(\mathfrak p)}
+ 
\sum_{N(\mathfrak p)\leq x} \sum_{k=3}^{\infty}  \frac{ \chi_{\rho}(\mathrm{Frob}_{\mathfrak p}^k)}{kN(\mathfrak p)^{\frac{k}{2}}}.
\end{equation}
Since
\[
 \sum_{N(\mathfrak p)\leq x} \sum_{k=3}^{\infty}\  \vline \frac{ \chi_{\rho}(\mathrm{Frob}_{\mathfrak p}^k)}{kN(\mathfrak p)^{\frac{k}{2}}} \vline
< \frac{4\,  \mathrm{dim}\, \rho}{3} \zeta_K(3/2)  < \infty,
\]
the last term of (\ref{eq:ast}) is absolutely convergent. 
Denote the limit by
\begin{equation}\label{C1}
C_1=
\lim_{x\to\infty}
\sum_{N(\mathfrak p)\leq x} \sum_{k=3}^{\infty}  \frac{ \chi_{\rho}(\mathrm{Frob}_{\mathfrak p}^k)}{kN(\mathfrak p)^{\frac{k}{2}}}.
\end{equation}
Next the central term of (\ref{eq:ast}) is estimated by means of 
the Generalized Mertens' theorem (\cite[Theorem~5]{R}):
\[
\sum_{N(\mathfrak p) \leq x} \frac{\chi_{\tau}(\mathrm{Frob}_{\mathfrak p} )}{N(\mathfrak p)} = \mathrm{mult} ({\textbf 1};\tau)
\log{\log{x}} +c_\tau+o(1) \quad (x\to \infty)
\] 
with a constant $c_\tau$ depending on the representation $\tau$ of 
Gal$(K^{\mathrm{sep}}/K)$.
Thus the central term in \eqref{eq:ast} is expressed as
\begin{align}
\sum_{N(\mathfrak p)\leq x}  \frac{ \chi_{\rho}(\mathrm{Frob}^2_{\mathfrak p})}{2N(\mathfrak p)}
 & =\frac{1}{2} \sum_{ N(\mathfrak p)\leq x} \frac{ \chi_{\mathrm{sym}^2 \rho}(
\mathrm{Frob}_{\mathfrak p}) -\chi_{\wedge^2 \rho} (\mathrm{Frob}_{\mathfrak p})}{N(\mathfrak p)} \nonumber \\
& =\frac{\nu (\rho)}{2} \log{\log{x}} +C_2+o(1)\label{C2}
\end{align} 
with some constant $C_2$.

If assuming (i), then (ii) follows from (\ref{eq:log}) \eqref{C1} \eqref{C2} with $c=L-C_1-C_2$.
Conversely, if (ii) holds with some $c$, then \eqref{eq:log} holds with $L=c+C_1+C_2$.
\end{proof}

\begin{cor}\label{cor:quadratic}
Let $L$ be a quadratic extension of a global field $K$,
and $\chi$ the nontrivial character of Gal$(L/K)$.
The following (i) and (ii) are equivalent:
\begin{enumerate}[\textrm(i)]
\item DRH(A) for $L_K(s,\chi)$ holds.
\item There exists a Chebyshev bias toward the primes $\p$ such that
$\chi (\mathrm{Frob}_{\mathfrak p}) =-1$ with the asymptotic
\begin{equation}\label{cor:asympFrob}
\sum_{N(\mathfrak p )\leq x } \frac{ \chi (\mathrm{Frob}_{\mathfrak p} ) }{N(\mathfrak p)^{\frac{1}{2} }}
=- \left( \frac{1}{2} +m_{\chi} \right) \log{\log{x}} +c+o(1)\quad(x\to\infty)
\end{equation}
with some constant $c$.
\end{enumerate}
\end{cor}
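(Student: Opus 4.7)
The plan is to derive the corollary as a direct specialization of Proposition \ref{prop:main} to the one-dimensional Artin representation arising from $\chi$, inflated along the quotient $\mathrm{Gal}(K^{\mathrm{sep}}/K) \twoheadrightarrow \mathrm{Gal}(L/K)$. Since $\chi$ is one-dimensional and nontrivial, it is automatically irreducible, and the hypotheses of the proposition hold verbatim; thus (i) here is exactly (i) of the proposition applied to $\rho = \chi$, and it remains only to match the constant in front of $\log\log x$.

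The genuine computation is the value $\nu(\chi)$. By definition
$$\nu(\chi) = \mathrm{mult}(\1,\mathrm{sym}^2\chi) - \mathrm{mult}(\1,\wedge^2\chi).$$
With the underlying space $V$ one-dimensional, $\wedge^2 V = 0$ and the second term vanishes. The symmetric square reduces to $\chi^{\otimes 2} = \chi^2$; since $\chi$ has order two in the character group of $\mathrm{Gal}(L/K)$, $\chi^2 = \1$, whence $\mathrm{mult}(\1,\mathrm{sym}^2\chi) = 1$. Therefore $\nu(\chi) = 1$. Substituting $\nu(\rho) = 1$ and $m_\rho = m_\chi$ into the conclusion of Proposition \ref{prop:main} yields exactly the asymptotic (\ref{cor:asympFrob}).

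For the bias interpretation, the left-hand side of (\ref{cor:asympFrob}) splits as
$$\sum_{\substack{N(\p) \leq x \\ \chi(\mathrm{Frob}_{\p}) = +1}} \frac{1}{\sqrt{N(\p)}} \;-\; \sum_{\substack{N(\p) \leq x \\ \chi(\mathrm{Frob}_{\p}) = -1}} \frac{1}{\sqrt{N(\p)}},$$
and by the Chebotarev density theorem the two subsets of primes have equal natural density, so the negative coefficient $-(\tfrac12 + m_\chi)$ of $\log\log x$ says that the primes with $\chi(\mathrm{Frob}_{\p}) = -1$ dominate in the weighted sense of the paper's definition of Chebyshev bias. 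Since Proposition \ref{prop:main} is already established, no step presents a serious obstacle; the corollary is simply the specialization plus the elementary calculation $\nu(\chi) = 1$ that exploits $\chi^2 = \1$ in the quadratic setting.
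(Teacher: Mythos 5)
Your proposal is correct and follows exactly the route the paper intends: the corollary is stated as an immediate specialization of Proposition \ref{prop:main} to the one-dimensional representation $\rho=\chi$, with the only computation being $\nu(\chi)=\mathrm{mult}(\1,\mathrm{sym}^2\chi)-\mathrm{mult}(\1,\wedge^2\chi)=1-0=1$ since $\chi^2=\1$ and $\wedge^2$ of a one-dimensional space vanishes. Your additional remark splitting the sum according to $\chi(\mathrm{Frob}_{\p})=\pm1$ and invoking Chebotarev for the equal-density condition correctly justifies the ``bias toward $\chi(\mathrm{Frob}_{\p})=-1$'' phrasing, in line with the paper's definitions.
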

Now we are ready to show Theorem~\ref{thm:main}.
\begin{proof}[Proof of Theorem~\ref{thm:main}]
First we prove that (i) implies (ii).
By the orthogonality relations of characters and Proposition~\ref{prop:main}, the following is obtained:
\begin{align*}
\lefteqn{[L:K] \pi_{\frac{1}{2}}(x\,  ;\sigma) }\\
& = [L:K] \sum_{\mathfrak p  \in S_{\sigma} \atop N(\mathfrak p)\leq x} \frac{1}{N(\mathfrak p)^{
\frac{1}{2} }} \\
& = |c_{\sigma}| \sum_{\mathfrak p \in S \atop N(\mathfrak p)\leq x} \sum_{\rho : \mathrm{irred.}} \frac{
\chi_{\rho} \left( \left( \frac{L/K}{\mathfrak p} \right)\right) \overline{\chi_{\rho} (\sigma)}}{N(\mathfrak p)^{\frac{1}{2}}} \\
& = |c_{\sigma} | \left\{ \pi_{\frac{1}{2},K} (x)-\sum_{\rho \ne {\textbf 1} }\overline{ \chi_{\rho} (\sigma)} \left( \frac{\nu (\rho)}{2} +m_{\rho} \right)
\log{\log{x}} \right\} +c +o(1)  \\
& = |c_{\sigma} | \{ \pi_{\frac{1}{2},K} (x) -(M(\sigma)+m(\sigma)) \log{\log{x}} \} +c+o(1).
\end{align*}                                                                                                                          
Next we show that (ii) implies (i). Let $\rho$ be a nontrivial irreducible representation of Gal$(L/K)$.  
Multiplying \eqref{cor:asympFrob} by $\chi_{\rho}(\sigma)$ and adding them for all $\sigma \in $Gal$(L/K)$,
we obtain
\begin{multline}\label{sum-sigma}
\pi_{\frac{1}{2},K} (x) \sum_{\sigma \in G} \chi_{\rho} (\sigma) -[L:K] \sum_{N(\mathfrak p) \leq x} \frac{ \chi_{\rho} (\mathrm{Frob}_{\mathfrak p} )}{
N(\mathfrak p)^{\frac{1}{2}} }\\
= \left( \sum_{\sigma \in G}
\chi_{\rho} (\sigma )M(\sigma)+\sum_{\sigma \in G} \chi_{\rho} (\sigma )m(\sigma )\right) \log{\log{x}} +c+o(1) \\
(x\to \infty)
\end{multline}
for some constant $c$. By the orthogonality relations of characters, we have $\sum_{\sigma \in G} \chi_{\rho} (\sigma)=0$
and
\begin{align*}
& \sum_{\sigma \in G} \chi_{\rho} (\sigma) M(\sigma)  =
\frac{1}{2} \sum_{\rho' \ne {\textbf 1} \atop \text{irred.}} \nu (\rho') \sum_{\sigma \in G} \chi_{\rho} (\sigma) \chi_{\rho'}(\sigma) 
 =\frac{|G|}{2} \nu (\rho) , \\
 & \sum_{\sigma \in G} \chi_{\rho} (\sigma )m (\sigma )=\sum_{\rho' \ne {\textbf 1} \atop \text{irred.} } m_{\rho'} \sum_{\sigma \in G}
 \chi_{\rho} (\sigma) \chi_{\rho'}(\sigma) =|G| m_{\rho}.
\end{align*}
From these results, (\ref{sum-sigma}) and Proposition~\ref{prop:main}, we reach DRH(A) for $\rho$.

The equivalence between (ii) and (iii)  follows from the fact that  $\sum_{\sigma \in G} M(\sigma) =\sum_{\sigma  \in G }m(\sigma)=0$.
\end{proof}

\section{Bias of primes in abelian extensions}
\subsection{General theory}
In this section, we consider biases of prime ideals in finite abelian extensions $L/K$ of global fields.
Put $G:=$Gal$(L/K)$. In the case of abelian extensions, the coefficient $M(\sigma)$ for $\sigma \in G $
in Theorem~\ref{thm:main} is given by the following:
\[
M(\sigma )=\begin{cases}
\frac{1}{2} ( |G/G^2|-1) & (\sigma \in G^2),\\
-\frac{1}{2} & (\sigma \not\in G^2).
\end{cases}
\]
Hence the following is obtained.
\begin{thm}\label{thm:abel}
Let $L/K$ be a finite abelian extension of global fields. 
The following (i),(ii) and (iii) are equivalent:                                     
\begin{enumerate}[\textrm(i)]                                                                        
\item DRH(A) holds for any nontrivial character of $G$.
\item
For any $\sigma \in G$,
there exists a constant $c$ such that
\begin{multline*}
\pi_{\frac{1}{2},K}(x)-[L:K] \pi_{\frac{1}{2}} (x;\sigma) \\
=\begin{cases}
\left( \frac{ |G/G^2|-1}{2} +m(\sigma) \right) \log{\log{x}} +c +o(1) & (\sigma \in G^2) \\
\left(-\frac{1}{2} +m(\sigma) \right) \log{\log{x}} +c+o(1) & (\sigma \not\in G^2)
\end{cases}\quad (x\to \infty),
\end{multline*}
where $m(\sigma ):= \sum_{\chi \ne {\textbf 1} } \chi (\sigma )m_{\chi}$
with $\chi$ running through all nontrivial characters of $G$.
\item
For any $\sigma, \tau \in G$,
there exists a constant $c$ such that
\begin{multline*}
\pi_{\frac{1}{2}} (x;\tau)-\pi_{\frac{1}{2}} (x;\sigma) \\
 = \begin{cases}
 \frac{1}{[L:K] } \left( \frac{ |G/G^2|}{2} +m(\sigma) -m(\tau) \right) \log{\log{x}} +c+o(1) \\
 \hfill   (\sigma \in G^2, \tau \not\in G^2) \\
 - \frac{1}{[L:K] } \left( \frac{ |G/G^2|}{2} -m(\sigma) +m(\tau) \right) \log{\log{x}} +c+o(1) \\
 \hfill   (\sigma \not\in G^2, \tau \in G^2) \\
  \frac{1}{[L:K] } \left( m(\sigma) -m(\tau) \right) \log{\log{x}} +c+o(1) \\
 \hfill   (\sigma, \tau \in G^2 \text{ or  } \sigma, \tau \not\in G^2) 
 \end{cases}\\
 (x\to \infty).
\end{multline*}
\end{enumerate}    
\end{thm}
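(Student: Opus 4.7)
The plan is to deduce Theorem~\ref{thm:abel} as a direct specialization of Theorem~\ref{thm:main} to the abelian setting, where every nontrivial irreducible representation $\rho$ of $G=\mathrm{Gal}(L/K)$ is a one-dimensional character $\chi$ and every conjugacy class is a singleton (so $|c_\sigma|=1$ and $[L:K]=|G|$). The only nontrivial work is the explicit evaluation of
\[
M(\sigma)=\frac12\sum_{\chi\neq\mathbf{1}}\chi(\sigma)\,\nu(\chi)
\]
claimed in the paragraph preceding the theorem; once this is in hand, conditions (ii) and (iii) follow by plugging the value of $M(\sigma)$ into the corresponding statements of Theorem~\ref{thm:main}.

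First I would compute $\nu(\chi)$ for a one-dimensional character $\chi$ acting on a line $V$. Since $\dim V=1$, we have $\mathrm{sym}^{2}\chi=\chi^{2}$ and $\wedge^{2}\chi=0$, so
\[
\nu(\chi)=\mathrm{mult}(\mathbf{1},\chi^{2})=\begin{cases}1 & (\chi^{2}=\mathbf{1}),\\ 0 & (\chi^{2}\neq\mathbf{1}).\end{cases}
\]
Thus only nontrivial characters of order dividing $2$ contribute, and these are exactly the nontrivial characters of the quotient $G/G^{2}$. Writing $\widehat{G/G^{2}}$ for the dual of $G/G^{2}$ (which has order $|G/G^{2}|$ since $G$ is a finite abelian group), I would apply character orthogonality on $G/G^{2}$:
\[
\sum_{\chi\in\widehat{G/G^{2}}}\chi(\sigma)=\begin{cases}|G/G^{2}| & (\sigma\in G^{2}),\\ 0 & (\sigma\notin G^{2}).\end{cases}
\]
Subtracting the trivial character and dividing by $2$ yields exactly the piecewise formula for $M(\sigma)$ stated in the excerpt.

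Next I would substitute this value of $M(\sigma)$ into Theorem~\ref{thm:main}(ii): since $|c_{\sigma}|=1$ and $[L:K]=|G|$, the left-hand side $\pi_{1/2,K}(x)-([L:K]/|c_{\sigma}|)\pi_{1/2}(x;\sigma)$ simplifies to $\pi_{1/2,K}(x)-[L:K]\pi_{1/2}(x;\sigma)$, and the two case structure of $M(\sigma)$ produces the two cases for $\sigma\in G^{2}$ and $\sigma\notin G^{2}$ shown in (ii). For (iii), I would likewise specialize Theorem~\ref{thm:main}(iii) with $|c_{\sigma}|=|c_{\tau}|=1$ and form $M(\sigma)-M(\tau)$ in all three combinations: when $\sigma,\tau$ lie in the same coset of $G^{2}$ the $M$-contribution cancels, leaving only the $m(\sigma)-m(\tau)$ term, while if exactly one of them lies in $G^{2}$ the difference is $\pm\frac12|G/G^{2}|$, giving the two sign variants recorded in the statement. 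The equivalence of (i) with (ii) and (iii) is then inherited verbatim from Theorem~\ref{thm:main}.

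There is no real obstacle in this argument; the only point requiring care is the identification of the set of nontrivial characters with $\chi^{2}=\mathbf{1}$ with $\widehat{G/G^{2}}\setminus\{\mathbf{1}\}$ and the correct application of orthogonality there, together with keeping track of the relation $m(\sigma)=\sum_{\chi\neq\mathbf{1}}\chi(\sigma)m_{\chi}$ so that the $m$-contributions in (iii) combine correctly in each of the three cases.
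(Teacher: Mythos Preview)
Your proposal is correct and follows exactly the paper's approach: the paper states the piecewise formula for $M(\sigma)$ in the abelian case and then simply writes ``Hence the following is obtained,'' so Theorem~\ref{thm:abel} is indeed just Theorem~\ref{thm:main} with this explicit value of $M(\sigma)$ substituted in. Your computation of $\nu(\chi)$ via $\mathrm{sym}^2\chi=\chi^2$, $\wedge^2\chi=0$ and the subsequent orthogonality argument on $G/G^2$ supplies precisely the details the paper leaves to the reader.
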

\begin{rem}\label{remark}
From Theorem \ref{thm:abel},
if $G\ne G^2$, then there exists a Chebyshev bias against primes $\p$ such that $\left(\frac{L/K}{\mathfrak p} \right)\in G^2$.
See Example \ref{ex:H=G^2} for details.
Note that $G/G^2$ is the Galois group of the composite of all quadratic extensions over $K$ contained in $L$, and that $|G/G^2|=1$ is
equivalent to $|G|=[L:K]$ being odd.
\end{rem}
\begin{cor}\label{cor:H-difference}
Let $L/K$ be a finite abelian extension of global fields. 
The following (i) and (ii) are equivalent:                                     
\begin{enumerate}[\textrm(i)]                                                                        
\item DRH(A) holds for any nontrivial character of $G$.
\item For any subset $H$ of $G$, there exists a constant $c$ such that
\begin{multline}
\lefteqn{|H|\,  \pi_{\frac{1}{2}}  (x;G\, \setminus \, H )-|G \, \setminus \, H | \, \pi_{\frac{1}{2}} (x;H) } \\
= \frac{\log{\log{x}}}{2 |G^2| } (| G\, \setminus \, H | \, |H\cap G^2|-|G^2 \, \setminus \, H |\,  |H| ) \\
-\log{\log{x}} \sum_{\sigma \in H} m(\sigma)
+c+o(1)\quad(x\to\infty). 
\label{eq:G-H}
\end{multline}
\end{enumerate}    
\end{cor}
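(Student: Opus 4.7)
The approach is to derive Corollary~\ref{cor:H-difference} directly from Theorem~\ref{thm:abel} by summing its per-element asymptotics over $H$ and over $G\setminus H$ and then forming a linear combination that eliminates the ``reference'' count $\pi_{\f12,K}(x)$.

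For (i)$\Rightarrow$(ii), Theorem~\ref{thm:abel} supplies, for each $\sigma\in G$, the asymptotic
\[
\pi_{\f12,K}(x)-[L:K]\pi_{\f12}(x;\sigma)=(M(\sigma)+m(\sigma))\log\log x + c + o(1),
\]
where $M(\sigma)=\f{|G/G^2|-1}{2}$ if $\sigma\in G^2$ and $M(\sigma)=-\f12$ otherwise.  Using the additivity $\pi_{\f12}(x;H)=\sum_{\sigma\in H}\pi_{\f12}(x;\sigma)$, I will sum the display over $\sigma\in H$ and separately over $\sigma\in G\setminus H$, obtaining two asymptotics for $|H|\pi_{\f12,K}(x)-[L:K]\pi_{\f12}(x;H)$ and $|G\setminus H|\pi_{\f12,K}(x)-[L:K]\pi_{\f12}(x;G\setminus H)$.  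Taking $|G\setminus H|$ times the first minus $|H|$ times the second eliminates $\pi_{\f12,K}(x)$; dividing through by $[L:K]=|G|$ will then reproduce the shape of \eqref{eq:G-H}, leaving only the identification of the $\log\log x$-coefficient.

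The coefficient identification splits into two pieces.  For the $M$-piece, I would first compute $\sum_{\sigma\in H}M(\sigma)=\f{|G|}{2|G^2|}|H\cap G^2|-\f{|H|}{2}$ (and the analogue for $G\setminus H$); plugging this into the linear combination, the $\f12|H||G\setminus H|$ contributions cancel across the difference and leave precisely $\f{1}{2|G^2|}(|G\setminus H|\cdot|H\cap G^2|-|G^2\setminus H|\cdot|H|)$.  For the $m$-piece, character orthogonality gives $\sum_{\sigma\in G}m(\sigma)=\sum_{\chi\ne\mathbf 1}m_\chi\sum_{\sigma\in G}\chi(\sigma)=0$, so $\sum_{\sigma\in G\setminus H}m(\sigma)=-\sum_{\sigma\in H}m(\sigma)$, and the two $m$-contributions coalesce into a single term proportional to $\sum_{\sigma\in H}m(\sigma)$.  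For the converse (ii)$\Rightarrow$(i), I would specialize (ii) to each singleton $H=\{\sigma\}$: the left-hand side collapses to $\pi_{\f12,K}(x)-[L:K]\pi_{\f12}(x;\sigma)$, and a direct substitution of $|H|=1$, $|G\setminus H|=|G|-1$, $|H\cap G^2|\in\{0,1\}$ etc.\ shows the right-hand-side coefficient reproduces $(M(\sigma)+m(\sigma))\log\log x$, thereby recovering Theorem~\ref{thm:abel}(ii) for every $\sigma\in G$; Theorem~\ref{thm:abel} then delivers (i).

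The hard part will be the combinatorial cancellation in the $M$-piece: one must track precisely how the $\f{|G|}{2|G^2|}|H\cap G^2|$ pieces and the $-\f{|H|}{2}$ pieces on the two sides of the linear combination conspire to yield the clean answer $\f{1}{2|G^2|}(|G\setminus H|\cdot|H\cap G^2|-|G^2\setminus H|\cdot|H|)$, with the non-$G^2$ contributions on each side properly accounted for by $|H\setminus G^2|=|H|-|H\cap G^2|$.  Once this bookkeeping is in place, the orthogonality-driven collapse of the $m$-terms and the return trip through Theorem~\ref{thm:abel} in the converse direction are essentially routine.
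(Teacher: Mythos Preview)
Your proposal is correct and follows essentially the same route as the paper: sum the per-$\sigma$ asymptotic of Theorem~\ref{thm:abel} over $H$ and over $G\setminus H$, combine to cancel $\pi_{\f12,K}(x)$, use $\sum_{\sigma\in G}m(\sigma)=0$ for the $m$-piece, and for the converse specialize to singletons $H=\{\sigma\}$ to recover Theorem~\ref{thm:abel}(ii). The only cosmetic difference is that the paper first solves for $\pi_{\f12}(x;H)$ and $\pi_{\f12}(x;G\setminus H)$ explicitly and then forms the combination, whereas you describe the linear combination directly; the underlying computation is identical.
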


\begin{proof}
Assume (i), and it follows from Theorem~\ref{thm:abel} that
\begin{align*}
 \pi_{\frac{1}{2}} (x;H)   = & \sum_{\sigma \in H\cap G^2} \pi_{\frac{1}{2}} (x;\sigma) +\sum_{\sigma \in H \, \setminus \, G^2} \pi_{\frac{1}{2}} (x;\sigma) \\
 =& \frac{1}{|G|} \left( |H| \, \pi_{\frac{1}{2},K}(x)-\frac{\log{\log{x}}}{2} ( |H\cap G^2| \, |G/G^2|-|H|) \right. \\
 & \left. -\log{\log{x}} \sum_{\sigma \in H} m(\sigma) \right) +c+o(1)\quad(x\to\infty),
\end{align*}
and that
\begin{align*}
\lefteqn{\pi_{\frac{1}{2}} (x;G \, \setminus \, H) }\\
  = & \sum_{\sigma \in G^2 \, \setminus \, H } \pi_{\frac{1}{2}} (x;\sigma) +\sum_{\sigma \in G\, \setminus (H \cup G^2)} \pi_{\frac{1}{2}} (x;\sigma) \\
 =& \frac{1}{|G|} \left( |G \, \setminus \, H| \, \pi_{\frac{1}{2},K}(x)-\frac{\log{\log{x}}}{2} ( |G^2 \, \setminus \, H|\,  |G/G^2|-|G \, \setminus \, H|) \right. \\
 & \left. -\log{\log{x}} \sum_{\sigma \in G \, \setminus \, H} m(\sigma) \right) +c+o(1)\quad(x\to\infty).
\end{align*}
The assertion (ii) follows from these two identities
and $\sum_{\sigma \in G} m(\sigma )=0$.

Conversely, assume (ii).
We can show Theorem~\ref{thm:abel} (ii) by considering the subset $H=\{ \sigma \}$ for (\ref{eq:G-H}) for any $\sigma \in G$.
\end{proof}
\begin{ex}\label{ex:H=G^2}
Let $L/K $ be a finite abelian extension of global fields. 
Assume DRH(A) for  any nontrivial character of $G$.
Then there exists a constant $c$ such that
\begin{align}
\lefteqn{\pi_{\frac{1}{2}} (x;G\, \setminus \, G^2) -( |G/G^2|-1) \pi_{\frac{1}{2} }(x;G^2)} \nonumber \\
& =
\left( \frac{ |G/G^2|-1}{2} -\frac{1}{|G^2|} \sum_{\sigma \in G^2 }m(\sigma) \right) \log{\log{x}}
 +c +o(1)\label{eq:H=G^2}
\end{align}
as $x\to\infty$.
Let $P_2$ be the set of primes $\p$ 
of $K$
such that 
$\p \nmid D_{L/K}$ and 
$\left(\frac{L/K}{\mathfrak p} \right)\in G^2$ and 
$P_1$ be the complement of $P_2$
in the set of primes $\p$ of $K$ satisfying $\p \nmid D_{L/K}$.
A positive constant $\delta$ is given by Theorem \ref{theo:chebotarev} as
$$
\delta
=\f{|G\setminus G^2|}{|G^2|}
=
\lim_{x\to \infty}
\f{\#\{\p\in P_1\ |\ N(\p)\le x\}}{\#\{\p\in P_2\ |\ N(\p)\le x\}}.
$$
Then the left hand side of \eqref{eq:H=G^2} is equal to
$$
\sum_{\p\in P_1,\ N(\p)\le x} \frac{1}{\sqrt{N(\p)}}
-\delta \sum_{\p\in P_2,\ N(\p)\le x} \frac{1}{\sqrt{N(\p)}}.
$$
Thus we conclude that 
if $G\ne G^2$ and $m(\sigma)=0$ for any $\sigma$, 
there exists a Chebyshev bias against primes $\p$ such that $\left(\frac{L/K}{\mathfrak p} \right)\in G^2$.
\end{ex}
\begin{ex}\label{ex:difference}
Let $L/K $ be a quadratic extension of global fields with $G=\mathrm{Gal}(L/K) $ $=\langle \tau \rangle $.
The following (i) and (ii) are equivalent:
\begin{enumerate}[\textrm(i)]
\item DRH(A) for $L_K(s,\chi)$ holds for  the nontrivial character $\chi$ of $G$.
\item There exists a Chebyshev bias toward the primes $\p$ such that
$\left(\frac{L/K}{\mathfrak p} \right)=\tau$ with the asymptotic
\[
\pi_{\frac{1}{2}} (x;\tau)-\pi_{\frac{1}{2}} (x;1)=\left( \frac{1}{2} +m_{\chi} \right) \log{\log{x}} + c+o(1)
\qquad (x\to\infty)
\]
for some constant $c$.
\end{enumerate}
\end{ex}

\bigskip
In the following sections we will apply the above results to various finite quotients of 
the idele class group $C_K$ of $K$.
The class field theory asserts that there is a one-to-one correspondence between finite abelian extensions over $K$
and open subgroups of finite index of $C_K$, and there is an isomorphism called the Artin map
$$
\psi : C_K/N_{L/K} C_L \longrightarrow \mathrm{Gal}(L/K)
$$
for any finite abelian extension $L/K$.
Let $K_{\mathfrak p}$ be the completion of $K$ for a prime ideal $\mathfrak p$ of $K$,
and $\pi_{\mathfrak p}$ a prime element of $K_{\mathfrak p}$.
Let $[ \pi_{\mathfrak p} ] \in C_K$ be the image of a natural map $K_{\mathfrak p}^{\times} \to C_K/N_{L/K}C_L$.
For a prime ideal $\mathfrak p$ which does not divide the relative discriminant $D_{L/K}$ of $L/K$, we obtain
$\psi ([\pi_{\mathfrak p}])=\left(\frac{L/K}{\mathfrak p} \right)$, where $\left(\frac{L/K}{\mathfrak p} \right)$ is the Frobenius element in
Gal$(L/K)$. 
\subsection{Bias against splitting primes}

 We denote by $S_D$ the set of all prime ideals  of $K$ which split completely and let $S_N:=S \, \setminus \, S_D$.
For $x\in \R_{>0}$, the following is defined:
\begin{align*}
\pi_s (x;L/K)_D:=\sum_{\mathfrak p \in S_D \atop N(\mathfrak p)\leq x} \frac{1}{N(\mathfrak p)^s}, \\
\pi_s (x;L/K)_N:=\sum_{\mathfrak p \in S_N \atop N(\mathfrak p)\leq x} \frac{1}{N(\mathfrak p)^s} .
\end{align*}
Since a prime ideal  $\mathfrak p$ of $K$ with $\mathfrak p \nmid D_{L/K}$ decomposes completely in $L$ if and only if
$\left( \frac{L/K}{\mathfrak p} \right)=1$, it holds $\pi_s(x;L/K)_D=\pi_s(x;1)$ and hence 
the following is obtained from Theorem~\ref{thm:abel}:
\begin{multline*}
\pi_{\frac{1}{2},K}(x) -[L:K]\pi_{\frac{1}{2} }(x;L/K)_D \\
=\left( \frac{|G/G^2|-1}{2} +m(1) \right) \log{\log{x}} +c+o(1)\quad (x\to\infty)
\end{multline*}
for some constant $c$
under the assumption of DRH(A) in the case of number fields.
This asymptotic suggests that if $G\ne G^2$ and $m(1)=0$, then there exists a Chebyshev bias against
primes which split completely.
Indeed, the following theorem is obtained.
\begin{thm}\label{thm:decompose}
Let $L/K$ be a finite abelian extension of global fields.
We assume DRH(A) for any nontrivial characters of $G$ in the case of number fields.
The following asymptotic is valid for some constant $c$:
\begin{multline*}
\pi_{\frac{1}{2} }(x;
L/K)_N-([L:K]-1)\pi_{\frac{1}{2}}(x;
L/K)_D  \\
=
\left( \frac{ |G/G^2| -1}{2} + m(1) \right)
\log{\log{x}} + c+o(1)\quad
(x\to\infty).
\end{multline*}
\end{thm}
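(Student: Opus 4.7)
The plan is to reduce Theorem~\ref{thm:decompose} directly to the $\sigma=1$ case of Theorem~\ref{thm:abel}, exploiting the fact that a prime of $K$ splits completely in $L$ if and only if it is unramified with trivial Frobenius.

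First I would identify $S_D$ with $S_1$. Any prime $\mathfrak p\nmid D_{L/K}$ satisfying $\left(\frac{L/K}{\mathfrak p}\right)=1$ splits completely in $L$, and conversely any prime splitting completely in $L$ must be unramified with trivial Frobenius; hence $S_D=S_1$ and $\pi_{\frac12}(x;L/K)_D=\pi_{\frac12}(x;1)$. Moreover, ramified primes cannot split completely, so they lie in $S_N$, consistent with their exclusion from every $S_\sigma$. Thus the tautological decomposition $S=S_D\sqcup S_N$ coincides with $S=S_1\sqcup(S\setminus S_1)$, and no error term is introduced by the finitely many ramified primes.

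Next, using $S=S_D\sqcup S_N$, I would write $\pi_{\frac12}(x;L/K)_N=\pi_{\frac12,K}(x)-\pi_{\frac12}(x;L/K)_D$, so that the left-hand side of the asserted asymptotic telescopes to
$$\pi_{\frac12}(x;L/K)_N-([L:K]-1)\,\pi_{\frac12}(x;L/K)_D=\pi_{\frac12,K}(x)-[L:K]\,\pi_{\frac12}(x;1).$$
Since $1\in G^2$, Theorem~\ref{thm:abel}(ii) applied with $\sigma=1$ then yields
$$\pi_{\frac12,K}(x)-[L:K]\,\pi_{\frac12}(x;1)=\left(\frac{|G/G^2|-1}{2}+m(1)\right)\log\log x+c+o(1)\quad(x\to\infty),$$
which is the stated conclusion.

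In effect there is no real obstacle: once Theorem~\ref{thm:abel} is granted, Theorem~\ref{thm:decompose} is a one-line rewriting via the dictionary ``completely split'' $\leftrightarrow$ ``Frobenius $=1$''. The only point worth a brief check is the compatibility at the ramified primes noted above, which ensures that the identifications $S_D=S_1$ and $S_N=S\setminus S_1$ are exact, so that no spurious contribution is absorbed into the constant $c$ beyond what Theorem~\ref{thm:abel} already permits.
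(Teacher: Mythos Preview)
Your proof is correct and essentially identical to the paper's: both identify $\pi_{\frac12}(x;L/K)_D=\pi_{\frac12}(x;1)$ via the splitting criterion and reduce to the $\sigma=1$ case, the only cosmetic difference being that the paper's formal proof cites Corollary~\ref{cor:H-difference} with $H=\{1\}$ (together with the explicit decomposition of $\pi_s(x;L/K)_N$ separating out the finitely many ramified primes) rather than invoking Theorem~\ref{thm:abel} directly, while your telescoping via $\pi_{\frac12,K}(x)=\pi_{\frac12}(x;L/K)_D+\pi_{\frac12}(x;L/K)_N$ bypasses that intermediate step. The paper in fact carries out exactly your computation in the paragraph preceding the theorem statement.
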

\begin{proof}
The assertion follows from Corollary~\ref{cor:H-difference} and the following  identities: 
\begin{align*}
\pi_s(x;L/K)_D & =\pi_s(x;1), \\
\pi_s(x;L/K)_N & =\pi_s(x;G\, \setminus \, \{1 \} )+
\sum_{\mathfrak p \in 
S,
\, \mathfrak p |D_{L/K} \atop N(\mathfrak p )\leq x} \frac{1}{N(\mathfrak p)^s}.
\end{align*}
\end{proof}
\begin{ex}\label{ex:decompose}
Let $L/K $ be a quadratic extension of global fields.
The following (i) and (ii) are equivalent:
\begin{enumerate}[\textrm(i)]
\item DRH(A) for $L_K(s,\chi)$ holds with $\chi$ the nontrivial character of $G=\mathrm{Gal}(L/K)$.
\item 

There exists a Chebyshev bias against splitting primes with the asymptotic:
\[
\pi_{\frac{1}{2}} (x;L)_N-\pi_{\frac{1}{2}} (x;L)_D=\left( \frac{1}{2} +m_{\chi} \right) \log{\log{x}} + c+o(1)
\quad(x\to\infty)
\]
for some constant $c$.
\end{enumerate}
\end{ex}
\subsection{Cyclotomic fields}
Let $L=\Q (\zeta_q)\ (q\geq 3)$ be a cyclotomic field. The Galois group $G:=\mathrm{Gal}(L/\Q)$ is isomorphic to the multiplicative group
$(\Z/q\Z)^{\times}$ and $a\in (\Z/q\Z)^{\times}$ corresponds to $\sigma_a= \left( \frac{L/\Q}{ (a) } \right) \in G$,
where $\left( \frac{L/\Q}{\cdot} \right)$ is the Artin symbol. The  group of characters of $G$  is isomorphic to the group of Dirichlet characters modulo $q$.
Let $t (q)$ be the number of distinct prime numbers dividing $q$, and
$$
t:= \begin{cases}
t (q) -1 & (2 ||q), \\
t (q) & (4||q \text{ or } 2\nmid q),\\
t (q)+1 & (8 |q).
\end{cases}
$$
Then $|G/G^2|= 2^{t }$ holds. For 
$x \in \R_{>0}$ and 
$a\in (\Z/q\Z)^{\times}$, let
$$
\pi_s(x;q,a) :=\sum_{p<x:\, \text{prime} \atop p\equiv a \!\!\!\!  \pmod{q} } \frac{1}{p^s}.
$$
Under the assumption of DRH(A) for any nontrivial character of $G$,
it follows from Theorem~\ref{thm:abel} that there exists a constant $c$ such that
\begin{multline}
\pi_{\frac{1}{2}, \Q}(x) -\varphi (q) \pi_{\frac{1}{2}} (x;q,a)  \\
=\begin{cases}
\left(\f{2^{t} -1}2+m(\sigma_a) \right) \log{\log{x}} +c+o(1)\\
 \hspace{5cm}  \text{ ($a$ is a quadratic residue modulo $q$)} \\
\left(-\frac{1}{2} +m(\sigma_a) \right) \log{\log{x}} +c+o(1) \quad  \text{(otherwise)}
\end{cases} \\
(x\to\infty),\label{eq:cyclo}
\end{multline}
where $\varphi (q)$ is Euler's totient function.
It is expected \cite{C} that $L(\frac{1}{2} ,\chi) \ne 0$ for any Dirichlet character $\chi$, that is $m(\sigma)=0$
for any $\sigma\in G$.
Under this assumption we have the following.
\begin{cor}\label{cor:quadraticresidue}
Assume DRH(A) for $L(s,\chi)$ and that $L(\f12,\chi)\ne0$ for Dirichlet characters $\chi$.
If $m(\sigma_a)=0$ for any $a\in (\Z/q\Z)^{\times}$, there exists a Chebyshev bias
against primes which are quadratic residues modulo $q$.
\end{cor}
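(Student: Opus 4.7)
The plan is to specialize the general machinery developed in Example~\ref{ex:H=G^2} to the cyclotomic extension $L=\Q(\zeta_q)/\Q$. First I would make the dictionary between primes modulo $q$ and Frobenius classes explicit: the Artin map identifies $G=\mathrm{Gal}(L/\Q)$ with $(\Z/q\Z)^{\times}$ in such a way that the Frobenius of a prime $p\nmid q$ is the residue class $p\bmod q$, and the subgroup of squares $G^{2}$ corresponds precisely to the subgroup of quadratic residues modulo $q$. Consequently a prime $p$ is a quadratic residue modulo $q$ if and only if $\sigma_{p}\in G^{2}$. Setting $P_{2}$ to be the set of QR primes (excluding the finitely many $p\mid q$) and $P_{1}$ the set of NR primes, I am exactly in the setting of Example~\ref{ex:H=G^2}.

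Next I would identify the density ratio required by the Chebyshev-bias definition in the introduction. By Chebotarev (in the concrete form of Dirichlet's theorem on primes in arithmetic progressions), the counts satisfy
\[
\delta=\lim_{x\to\infty}\frac{|P_{1}(x)|}{|P_{2}(x)|}=\frac{|G\setminus G^{2}|}{|G^{2}|}=|G/G^{2}|-1=2^{t}-1,
\]
using the formula $|G/G^{2}|=2^{t}$ recalled just before the statement. For $q\ge 3$ one has $t\ge 1$, so $\delta\ge 1$ and the situation is nondegenerate.

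The conclusion is then an immediate read-off from Example~\ref{ex:H=G^2}. Under DRH(A) and using $m(\sigma_{a})=0$ for all $a$ (which is itself forced by the hypothesis $L(\f12,\chi)\ne 0$), the asymptotic \eqref{eq:H=G^2} collapses to
\[
\pi_{\frac{1}{2}}(x;G\setminus G^{2})-(2^{t}-1)\,\pi_{\frac{1}{2}}(x;G^{2})=\frac{2^{t}-1}{2}\log\log x+c+o(1).
\]
The left hand side is by definition $\sum_{p\in P_{1},\,p\le x}1/\sqrt{p}-\delta\sum_{p\in P_{2},\,p\le x}1/\sqrt{p}$, and the coefficient $(2^{t}-1)/2$ is strictly positive whenever $t\ge 1$. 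This is precisely a Chebyshev bias against $P_{2}$, i.e.\ against quadratic residues modulo $q$, as required.

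No real obstacle remains: all the analytic content---the DRH limit, the Mertens-type identity controlling the prime-square sum, and the combinatorics producing the $\sqrt{2}^{\nu(\rho)}$ factor through character orthogonality---has already been packaged into Proposition~\ref{prop:main}, Theorem~\ref{thm:abel} and Example~\ref{ex:H=G^2}. All that is left is the bookkeeping above: identifying $G^{2}$ with the QR subgroup, reading off the coefficient $|G/G^{2}|-1=2^{t}-1$ for the density $\delta$, and confirming positivity of the leading coefficient.
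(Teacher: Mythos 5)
Your argument is correct and follows essentially the same route as the paper: both specialize Theorem~\ref{thm:abel} to the cyclotomic extension $\Q(\zeta_q)/\Q$, identify the image of $G^{2}$ under the Artin map with the quadratic residues modulo $q$, and use $m(\sigma_a)=0$ to make the $\log\log x$ coefficient a positive constant, namely $(2^t-1)/2$. The only cosmetic difference is that you pass through the aggregated form of Example~\ref{ex:H=G^2} (which matches Definition~1.2 directly), whereas the paper reads the bias off from the per-class asymptotic~\eqref{eq:cyclo} and its pairwise consequence~\eqref{eq:difference}; both are immediate specializations of the same theorem.
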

Moreover the size of the bias grows with an increase in the number of distinct prime divisors of $q$.
Indeed under the assumption of $m(\sigma_a)=m(\sigma_b)=0$, it follows from (\ref{eq:cyclo}) that
\begin{equation}\label{eq:difference}
\pi_{\frac{1}{2}} (x;q,b) -\pi_{\frac{1}{2}}(x;q,a)=\frac{2^{t-1}}{\varphi (q) }\log{\log{x}} +c+o(1)
\quad (x\to \infty)
\end{equation}
for some constant $c$ with $a$ and $b$ being a quadratic residue and a non-residue modulo $q$, respectively.
We also have
$$
\pi_{\frac{1}{2}} (x;q,b) -\pi_{\frac{1}{2}}(x;q,a)= c+o(1)\quad (x\to \infty),
$$
when both $a$ and $b$ are quadratic residues or non-residues modulo $q$
under the assumption of $m(\sigma_a)=m(\sigma_b)=0$.

\begin{cor}
Assume DRH(A) for $L(s,\chi)$ and that $L(\f12,\chi)\ne0$ for Dirichlet characters $\chi$.
If $m(\sigma_a)=0$ for any $a\in (\Z/q\Z)^{\times}$, there exist no biases between
any pair of quadratic residues,
and between any pair of quadratic non-residues.
\end{cor}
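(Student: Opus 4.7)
The plan is to derive this corollary directly from the asymptotic \eqref{eq:cyclo} established in the previous subsection, using the assumption $m(\sigma_a)=0$ for every $a\in(\Z/q\Z)^\times$ to eliminate the $\log\log x$ terms when a pair of residue classes lies in the same quadratic character class.

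First I would apply \eqref{eq:cyclo} separately to the two congruence classes $a$ and $b$, and then subtract. Since $\pi_{\frac12,\Q}(x)$ and the common factor $\varphi(q)$ appear in both copies with the same coefficients, subtraction yields
\begin{equation*}
\varphi(q)\bigl(\pi_{\frac12}(x;q,b)-\pi_{\frac12}(x;q,a)\bigr)
= \bigl(A(a)-A(b)\bigr)\log\log x + \bigl(m(\sigma_a)-m(\sigma_b)\bigr)\log\log x + c + o(1),
\end{equation*}
where $A(a)$ denotes the coefficient that appears in \eqref{eq:cyclo}, namely $(2^t-1)/2$ when $a$ is a quadratic residue modulo $q$ and $-1/2$ otherwise.

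Next, if $a$ and $b$ are both quadratic residues then $A(a)=A(b)=(2^t-1)/2$, and if both are non-residues then $A(a)=A(b)=-1/2$. In either case the first $\log\log x$ term cancels. The hypothesis $m(\sigma_a)=m(\sigma_b)=0$ then kills the remaining $\log\log x$ contribution, leaving
\begin{equation*}
\pi_{\frac12}(x;q,b)-\pi_{\frac12}(x;q,a) = c + o(1) \qquad (x\to\infty),
\end{equation*}
which is precisely the statement that no Chebyshev bias exists between these two classes.

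There is really no serious obstacle here: the whole argument is a clean subtraction once \eqref{eq:cyclo} is in hand, and the assumption that all $L(\frac12,\chi)$ are nonzero (hence $m_\chi=0$, hence $m(\sigma)=0$ for every $\sigma$) ensures that the only surviving $\log\log x$ contribution is the one coming from $\nu(\rho)$ via the coefficient $A(\cdot)$, which by construction depends only on whether the class is a quadratic residue or not. The mildly delicate point to record in the write-up is to verify that the $o(1)$ error terms in the two applications of \eqref{eq:cyclo} combine additively into an $o(1)$ error, and that the two implicit constants on the right-hand side combine into a single constant $c$; both are immediate from the definition of little-$o$ notation.
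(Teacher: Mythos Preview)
Your proposal is correct and follows essentially the same approach as the paper: the paper likewise derives the unbiased case directly from \eqref{eq:cyclo} by subtracting the two instances for $a$ and $b$, observing that the coefficient $(2^t-1)/2$ or $-1/2$ depends only on the quadratic residue class, and using $m(\sigma_a)=m(\sigma_b)=0$ to eliminate the remaining $\log\log x$ term.
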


\begin{ex}[Chebyshev's original case $q=4$]\label{original}
The group $(\Z/4\Z)^\times$ has two elements $1$ and $3\pmod 4$,
which are a quadratic residue and a non-residue, respectively. 
From Proposition \ref{prop:main}, DRH(A) for $L_\Q(s,\chi)$ with $\chi$
the nontrivial character of $(\Z/4\Z)^\times$ is equivalent to the asymptotic
$$
\pi_{\f12}(x;\,4,\,3)-\pi_{\f12}(x;\,4,\,1)
=\f12\log\log x +c+o(1)\qquad (x\to\infty).
$$
This elucidates the original question of Chebyshev.
\end{ex}

\begin{ex}[$q=8$]
The group $(\Z/8\Z)^\times$ has four elements $1$, $3$, $5$ and $7\pmod 8$,
all of which except $1$ are quadratic non-residues. 
Thus DRH(A) for $L_\Q(s,\chi)$ for all nontrivial characters $\chi$ of $(\Z/8\Z)^\times$ is equivalent to the
following asymptotics:
for $j=3,\,5,\,7$
\begin{align*}
\pi_{\f12}(x;\,8,\,j)-\pi_{\f12}(x;\,8,\,1)
&=\f12\log\log x +c+o(1)\qquad (x\to\infty),
\end{align*}
and for any pairs of $j$, $k\in\{3,\,5,\,7\}$
$$
\pi_{\f12}(x;\,8,\,j)-\pi_{\f12}(x;\,8,\,k)=c+o(1)\qquad (x\to\infty).
$$
\end{ex}

\subsection{Cyclotomic function fields}
Let $K=\mathbb F_q(T)$ and $L=K(\Lambda_M)\, (M\in \F_q[T], M\ne 0)$ be a cyclotomic function field introduced by Carlitz \cite{Ca}. 
The set $\Lambda_M$ is given by $\Lambda_M := \{ u \in \overline{K} \ |\ u^M =0 \}$
where $u^M:=M(\varphi +\mu )(u)$,  and $\varphi $ and $\mu$ the endomorphisms of $\F_q$-module $\overline{K}$ defined by
$\varphi (u) :=u^q$ and $\mu (u) :=Tu$.
Hayes \cite{H}  developed Carlitz' ideas to an explicit class field theory for algebraic function fields.
The Galois group $G=\mathrm{Gal}(L/K)$ is isomorphic to  the multiplicative group $(\F_q[T]/M\F_q[T])^{\times}$.
A residue class $A \in (\F_q[T]/M\F_q[T])^{\times}$ corresponds to $\sigma_A := \left( \frac{L/K}{(A) } \right) \in G$,
where $\left( \frac{L/K}{\cdot} \right)$ is the Artin symbol. For a positive integer $n$ and $A\in (\F_q[T]/M\F_q[T])^{\times}$,
use the following:
\begin{align*}
\pi_{s,K}(q^n )& :=\sum_{\deg{P} \leq n} \frac{1}{N(P)^s},\\
\pi_s (q^n;M,A)&:=\sum_{\deg{P} \leq n \atop P\equiv A \!\!\!\! \pmod{M}} \frac{1}{N(P)^s},
\end{align*}
where $P$ runs through all monic irreducible
polynomials
 in $\F_q[T]$.
From Theorem~\ref{thm:abel} there exists a constant $c$ such that
\begin{multline*}
\pi_{\frac{1}{2}, K}(q^n) -\Phi (M) \pi_{\frac{1}{2}} (q^n;M,A) \\
=\begin{cases}
\left( \f{2^t -1}2+m(\sigma_A) \right) \log{n}+c+o(1)   \\
\hspace{5cm}  \text{ ($A$ is a quadratic residue modulo $M$)} \\
\left(-\frac{1}{2} +m(\sigma_A) \right) \log{n}  +c+o(1)  \quad (n\to \infty) \quad  \text{(otherwise)}
\end{cases} \\
(n\to\infty),
\end{multline*}
where $\Phi (M):= |(\F_q[T]/M \F_q[T])^{\times}|$ and $t:=\dim_{\F_2}(G/G^2)$.
Note that if $M$ is a product of different $r$ irreducible polynomials, then
$t$ is given by $t=\begin{cases}
1 & \text{($q$ is a power of $2$)} \\
2^r & \text{(otherwise)}
\end{cases}$.

\begin{cor}
If $m(\sigma_A)=0$ for any $A \in (\F_q[T]/M\F_q[T])^{\times}$, there exists a Chebyshev bias
against irreducible polynomials which are quadratic residues modulo $M$.
\end{cor}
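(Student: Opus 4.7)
The plan is to apply Example \ref{ex:H=G^2} directly to the cyclotomic function field $L = K(\Lambda_M)$ over $K = \F_q(T)$. The hypothesis DRH(A) for all nontrivial characters of $G = \mathrm{Gal}(L/K)$ that feeds into that example is unconditional in positive characteristic, so no further analytic input is required.

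First I would pin down the Galois-theoretic data: the Artin map identifies $G$ with $(\F_q[T]/M\F_q[T])^{\times}$, and an unramified monic irreducible $P$ is a quadratic residue modulo $M$ exactly when $\sigma_P \in G^2$. By the formula for $t = \dim_{\F_2}(G/G^2)$ recalled immediately above the corollary, one has $t \ge 1$, so $G^2 \subsetneq G$ and $|G/G^2| = 2^t \ge 2$.

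Next I would invoke the function-field analog of Example \ref{ex:H=G^2} with $H = G^2$. Under the standing assumption $m(\sigma_A) = 0$ for every $A$, the correction $\frac{1}{|G^2|}\sum_{\sigma \in G^2} m(\sigma)$ vanishes, leaving the coefficient $(2^t - 1)/2 > 0$. Writing $P_1$ for the non-residue monic irreducibles and $P_2$ for the residue monic irreducibles, Theorem \ref{theo:chebotarev} supplies the density ratio $\delta = 2^t - 1$, and the resulting asymptotic
\[
\sum_{P \in P_1,\, \deg P \le n} \frac{1}{\sqrt{N(P)}} - \delta \sum_{P \in P_2,\, \deg P \le n} \frac{1}{\sqrt{N(P)}} = \frac{2^t - 1}{2}\log n + c + o(1)
\]
fits the shape required by Definition 1.2 with $C = (2^t - 1)/2 > 0$. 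The finitely many primes dividing $M$ contribute $O(1)$ and can be absorbed into $c$.

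The only real obstacle is bookkeeping rather than substance: translating the $\log\log x$ appearing in the number-field Example \ref{ex:H=G^2} to the $\log n$ natural in the function-field setting via $x = q^n$ (the difference $\log\log q^n - \log n$ being a harmless additive constant), and verifying that the passage from the Corollary \ref{cor:H-difference} form to the paired-sum statement of Definition 1.2 proceeds exactly as in the number-field case. No new analytic content is needed.
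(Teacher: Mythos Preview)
Your proposal is correct and follows essentially the same route as the paper: the corollary is stated without proof as an immediate consequence of the displayed formula just preceding it (itself Theorem~\ref{thm:abel} specialized to $L=K(\Lambda_M)$), and you simply package that same consequence via Example~\ref{ex:H=G^2}, which is the aggregated form of the same identity. The only difference is cosmetic—you invoke the grouped version over $G^2$ directly rather than the element-wise display—so there is nothing substantively new or missing in your argument.
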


\begin{ex}
Put $q=2$ and $M=T^2$. 
The quotient ring $\F_2[T]/(T^2)$ has two invertible elements $1$ and $T+1$.
Since $m(\sigma_1)=m(\sigma_{T+1})=0$, there exists a Chebyshev bias 
toward irreducible polynomials congruent to the quadratic non-residue $T+1$.
If we denote an irreducible polynomial by $h=\sum_{j=0}^n a_j T^j$ $(a_j\in\F_2)$,
there exists a Chebyshev bias toward those with $a_1=1$.
\end{ex}

\subsection{Bias against principal ideals
and divisors
}
Let $K$ be a global field. In this section, we apply Theorem~\ref{thm:abel} for finite
abelian extensions corresponding to the ideal class group and the divisor class group of degree $0$ which are
quotients of the idele class group of $K$. 
Let $I_K$ be the group of fractional ideals of $K$, $P_K$ its
subgroup of principal ideals, $\mathrm{Cl}_K :=I_K/P_K$ the ideal class group and $h_k:=|\mathrm{Cl}_K|$ the class number.
We denote by $\widetilde{K}$ the Hilbert class field of $K$, hence we have  $\mathrm{Cl}_K \simeq \mathrm{Gal} (\widetilde{K}/K)$.
An ideal class $[\mathfrak a] \in \mathrm{Cl}_K$ corresponds to $\sigma_{\mathfrak a}:=\left( \frac{\widetilde{K}/K}{\mathfrak a} \right) \in \mathrm{Gal}
(\widetilde{K}/K)$. 

We assume DRH(A) below for the case that $K$ is a number field.
From Theorem~\ref{thm:abel} the following is obtained:
\begin{align*}
& \sum_{N(\mathfrak p)\leq x} \frac{1}{\sqrt{N (\mathfrak p )} }-h_K \sum_{\mathfrak p\in P_K \atop N(\mathfrak p) \leq x} 
\frac{1}{\sqrt{N(\mathfrak p)}} \\
=& \left(\frac{ |{\mathrm Cl}_K/\mathrm{Cl}_K^2|-1}{2}+m(1) \right) \log{\log{x}} + c+o(1)
\quad(x\to\infty) 
\end{align*}
for some constant $c$,
where $\p$ runs through prime ideals of $K$. 
Furthermore, the following holds:
\begin{multline*}
\sum_{\mathfrak p  \not\in P_K \atop N(\mathfrak p ) \leq x} \frac{1}{\sqrt{N(\mathfrak p)}} -(h_K-1) \sum_{\mathfrak p\in P_K \atop
  N(\mathfrak p)\leq x } \frac{1}{\sqrt{N(\mathfrak p)}} \\
=
\left(\frac{ |\mathrm{Cl}_K/\mathrm{Cl}_K^2| -1}{2} +m(1) \right)
\log{\log{x}}  +c+o(1)\quad
(x \to \infty)
\end{multline*}
for some constant $c$. 

Hence we have the following.
\begin{cor}\label{cor:principal}
Assume DRH(A) for $L_K(s,\sigma_{\mathfrak a})$ and that $m(\sigma_{\mathfrak a})=0$
for any $[\mathfrak a] \in \mathrm{Cl}_K$.
If $|\mathrm{Cl}_K|$ is even, then
there exists a Chebyshev bias against principal ideals in the whole set of prime ideals of $K$. 
\end{cor}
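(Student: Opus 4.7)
The plan is to invoke the asymptotic formula displayed immediately before the corollary, which is itself a direct specialization of Theorem~\ref{thm:abel} to the Hilbert class field extension $\tilde K/K$. Under the Artin isomorphism $\mathrm{Cl}_K \simeq \mathrm{Gal}(\tilde K/K)$, a prime ideal $\p$ with $\p \nmid D_{\tilde K/K}$ is principal if and only if $\sigma_\p = 1$; the finitely many ramified primes contribute only $O(1)$ to any sum of the shape $\sum 1/\sqrt{N(\p)}$ and so may be absorbed into the additive constant $c$.

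The first step is to take the display
\begin{align*}
& \sum_{\p \notin P_K,\, N(\p) \le x} \f{1}{\sqrt{N(\p)}} - (h_K - 1)\!\!\sum_{\p \in P_K,\, N(\p) \le x}\!\! \f{1}{\sqrt{N(\p)}} \\
& \qquad = \l(\f{|\mathrm{Cl}_K/\mathrm{Cl}_K^2| - 1}{2} + m(1)\r)\log\log x + c + o(1),
\end{align*}
which the paragraph above the corollary derives from Theorem~\ref{thm:abel}(ii) with $\sigma = 1 \in G^2$, and to verify that the coefficient of $\log\log x$ is strictly positive under the present hypotheses. Since $m(\sigma_{\mathfrak a}) = 0$ is assumed for every ideal class, in particular $m(1) = 0$. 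Since $|\mathrm{Cl}_K|$ is even, the finite abelian group $\mathrm{Cl}_K$ has nontrivial $2$-torsion, so $\mathrm{Cl}_K / \mathrm{Cl}_K^2$ is nontrivial and $|G/G^2| \ge 2$. Hence the coefficient simplifies to $(|\mathrm{Cl}_K/\mathrm{Cl}_K^2| - 1)/2 \ge 1/2 > 0$.

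Finally I would verify that this matches Definition~1.2 of a Chebyshev bias against principal primes. Setting $P_1(x) = \{\p \notin P_K : N(\p) \le x\}$ and $P_2(x) = \{\p \in P_K : N(\p) \le x\}$, Theorem~\ref{theo:chebotarev} applied to $\tilde K/K$ gives that principal primes have density $1/h_K$ among all primes of $K$, so $|P_1(x)|/|P_2(x)| \to h_K - 1 =: \delta$. The display above is then precisely the assertion that there is a Chebyshev bias toward $P_1$ (against principal primes) with $C = (|\mathrm{Cl}_K/\mathrm{Cl}_K^2| - 1)/2 > 0$. There is essentially no serious obstacle: the corollary is a direct specialization, and its only content beyond the asymptotic already in hand is the two elementary observations that the even-class-number hypothesis forces $|G/G^2| \ge 2$ and that the vanishing hypothesis forces $m(1) = 0$.
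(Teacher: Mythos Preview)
Your proposal is correct and follows exactly the paper's approach: the corollary is deduced directly from the displayed asymptotic just above it (itself a specialization of Theorem~\ref{thm:abel} to $\tilde K/K$), together with the observations that $m(1)=0$ by hypothesis and that $|\mathrm{Cl}_K|$ even forces $|\mathrm{Cl}_K/\mathrm{Cl}_K^2|\ge 2$. One small remark: since $\tilde K/K$ is the Hilbert class field it is unramified at every finite prime, so your aside about absorbing the ramified primes into the constant is in fact vacuous here.
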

\begin{ex}\label{ex:h-quadratic}
Let $K=\Q(\sqrt{D})$  be a quadratic field with the discriminant $D$. Let $P_K^+$
be the group of totally positive principal ideals and $\mathrm{Cl}_K^+:=I_K/P_K^+$  the narrow ideal class group of $K$ and
$h_K^+:=|\mathrm{Cl}_K^+|$. We apply Theorem~\ref{thm:abel} for the finite abelian extension $L/K$
corresponding to $\mathrm{Cl}_K^+$, hence $\mathrm{Cl}_K^+ \simeq
\mathrm{Gal} (L/K)$ holds. From the genus theory we obtain 
\begin{align*}
& \sum_{N(\mathfrak p)\leq x} \frac{1}{\sqrt{N (\mathfrak p )} }-h_K^+ \sum_{\mathfrak p\in P_K^+ \atop N(\mathfrak p )\leq x} 
\frac{1}{\sqrt{N(\mathfrak p)}} \\
=& \left(\frac{ 2^{t(D) -1} -1}{2}+m(1) \right) \log{\log{x}} + c+o(1)
\quad (x\to \infty)
\end{align*}
for some constant $c$, where $t(D)$ is the number of distinct prime numbers dividing $D$.
\end{ex}
\begin{ex}\label{ex:D0}
Let $K$ be an algebraic function field of one variable over $\F_q$.
Let $D_K^0$ be the group of divisors of degree $0$, $H_K$ its subgroup of principal divisors, ${\mathfrak Cl}_K:=D_K^0/H_K$
 the divisor class group of degree $0$ and ${\mathfrak h}_K:=|{\mathfrak Cl}_K|$.
We apply Theorem~$\ref{thm:abel}$ for the finite abelian extension $L/K$ corresponding to ${\mathfrak Cl}_K$, hence
 ${\mathfrak Cl}_K \simeq \mathrm{Gal}(L/K)$ holds. 
The following is obtained.
\begin{align*}
& \sum_{\mathrm{deg} P\leq n} \frac{1}{\sqrt{N (P )} }-{\mathfrak h}_K \sum_{ P\in H_K \atop \mathrm{deg}P \leq n} 
\frac{1}{\sqrt{N(P)}} \\
=& \left(\frac{  | {\mathfrak Cl}_K/{\mathfrak Cl}_K^2 |-1}{2}+m(1) \right) \log{n} + c+o(1)
\quad (n\to \infty)
\end{align*}
for some constant $c$, where the sums are taken over monic irreducible polynomials in $\F_q[T]$.
\end{ex}

\section{Perspectives}
The method of the proof of Theorem \ref{thm:main} can be applied to more general $L$-functions.
This section introduces two examples as applications of our technique
as well as future prospects on Selberg zeta functions.

\subsection{Automorphic $L$-functions}
Let $\tau(n)$ be the Ramanujan's function defined by
$$
\Delta(z)=q\prod_{k=1}^\infty (1-q^k)^{24}
=\sum_{n=1}^\infty \tau(n)q^n.
$$
Kurokawa and the second author obtain a bias on its signature as follows:
\begin{thm}[Koyama-Kurokawa \cite{KK}]\label{tau}
Assume DRH(A) for $L(s+\f{11}2,\Delta)$. Then the sequence
$a(p)=\tau(p)p^{-11/2}$ has a Chebyshev bias to being positive.
More precisely, it holds that
$$
\sum_{\genfrac{}{}{0pt}{1}{p\le x}{\text{prime}}}\f{\tau(p)}{p^6}=\f12\log\log x+c+o(1)
\quad(x\to\infty)
$$
with some constant $c$.
\end{thm}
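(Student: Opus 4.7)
The plan is to transport the scheme of Proposition~\ref{prop:main} to the cuspidal automorphic $L$-function $L(s+11/2,\Delta)$. First, write its Euler product on the critical line $\Re(s)=1/2$ using the Satake parameters,
$$
L(s+11/2,\Delta)=\prod_p(1-\alpha_p p^{-s})^{-1}(1-\beta_p p^{-s})^{-1},
$$
with $\alpha_p+\beta_p=\tau(p)/p^{11/2}$, $\alpha_p\beta_p=1$, and $|\alpha_p|=|\beta_p|=1$ by Deligne. Taking the logarithm of the DRH(A) limit at $s=1/2$ yields
$$
m\log\log x+\sum_{p\le x}\sum_{k=1}^\infty\frac{\alpha_p^k+\beta_p^k}{k\,p^{k/2}}=L+o(1)\quad(x\to\infty),
$$
with $m=\mathrm{ord}_{s=1/2}L(s+11/2,\Delta)$.

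Next, split the double sum into the three ranges $k=1$, $k=2$, and $k\ge 3$ in exact analogy with \eqref{eq:ast}. The Deligne bound $|\alpha_p^k+\beta_p^k|\le 2$ makes the $k\ge 3$ tail absolutely convergent via comparison with $\zeta(3/2)$, contributing only an $O(1)$ constant. The $k=1$ piece equals the target sum, since $(\alpha_p+\beta_p)/p^{1/2}=\tau(p)/p^{6}$.

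The crux is the $k=2$ contribution, which plays the role of the $\nu(\rho)/2\log\log x$ term. Using $\alpha_p\beta_p=1$, expand $\alpha_p^2+\beta_p^2=(\alpha_p+\beta_p)^2-2=\tau(p)^2/p^{11}-2$. The Rankin--Selberg factorization $L(s,\pi_\Delta\otimes\pi_\Delta)=\zeta(s)\,L(s,\mathrm{sym}^2\pi_\Delta)$, combined with the entirety and nonvanishing at $s=1$ of $L(s,\mathrm{sym}^2\pi_\Delta)$ (Gelbart--Jacquet cuspidality of the symmetric square), isolates a simple pole at $s=1$ coming solely from $\zeta(s)$. A Mertens-type theorem for this $L$-function then yields $\sum_{p\le x}\tau(p)^2/p^{12}=\log\log x+O(1)$; combining with $\sum_{p\le x}1/p=\log\log x+O(1)$ shows that the $k=2$ contribution is $-\tfrac12\log\log x+O(1)$. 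This is precisely the formal value $\nu(\pi_\Delta)/2=-1/2$ with $\nu(\pi_\Delta)=\mathrm{mult}(\mathbf{1},\mathrm{sym}^2\pi_\Delta)-\mathrm{mult}(\mathbf{1},\wedge^2\pi_\Delta)=0-1=-1$, since $\wedge^2\pi_\Delta$ is the trivial central character of the level-one weight-$12$ form. Solving the resulting identity for the $k=1$ piece under the additional hypothesis $L(1/2,\pi_\Delta)\ne 0$, i.e.\ $m=0$, produces $\sum_{p\le x}\tau(p)/p^6=\tfrac12\log\log x+c+o(1)$.

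The main obstacle is exactly this $k=2$ step. In the Artin setting of Proposition~\ref{prop:main} it reduced to the character identity $\chi_\rho(g^2)=\chi_{\mathrm{sym}^2\rho}(g)-\chi_{\wedge^2\rho}(g)$ together with the standard Mertens theorem for Artin $L$-functions; here it requires the automorphic Rankin--Selberg machinery and the cuspidality of $\mathrm{sym}^2\pi_\Delta$ to pinpoint the order of the pole at $s=1$. A secondary technical point is that DRH(A) alone does not rule out $m>0$, so the clean coefficient $1/2$ in the conclusion requires the auxiliary input $L(6,\Delta)\ne 0$; without it, the coefficient would read $\tfrac12-m$.
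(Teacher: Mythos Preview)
Your argument is correct and is precisely the transport of Proposition~\ref{prop:main} to the automorphic setting that the paper intends: the paper does not supply its own proof of Theorem~\ref{tau} but cites \cite{KK} and explicitly frames it as an instance of ``the method of the proof of Theorem~\ref{thm:main},'' which is what you carry out. Your identification of the $k=2$ term via $\wedge^2\pi_\Delta=\mathbf 1$ (so $\nu(\pi_\Delta)=-1$) and the Rankin--Selberg/symmetric-square Mertens estimate is exactly the automorphic substitute for the character identity and Rosen's generalized Mertens theorem used in \eqref{C2}; your closing remark that the stated coefficient $\tfrac12$ tacitly uses $m=\mathrm{ord}_{s=6}L(s,\Delta)=0$ is a fair and accurate caveat.
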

Here Ramanujan's $L$-function is defined as
$$
L(s,\Delta)=\sum_{n=1}^\infty \f{\tau(n)}{n^s}\quad \left(\Re(s)>\frac{13}{2}\right). 
$$
Theorem \ref{tau} suggests that the Satake parameters 
$\theta(p)\in[0,\pi]$ have a bias to being in $[0,\pi/2]$, where we define $\theta(p)$ as
$\tau(p)=2p^{\f{11}2}\cos(\theta(p))$.
In general, biases of the Satake parameters for automorphic forms on $GL(n)$ may be obtained. 
For example, according to \cite{Koyama}, under DRH(A) for the symmetric square $L$-function of $L(s,\Delta)$,
which is an $L$-function for $GL(3)$,
a bias to being \textit{negative} was found for a sequence over squares of primes. Namely,
$$
\sum\limits_{p\le x}\f{\tau(p^2)}{p^{\f{23}2}}
=-\f12\log\log x+c+o(1)\quad(x\to\infty)
$$
with some constant $c$.

Sarnak \cite{S} also reached a similar prediction under the assumption of
the Generalized Riemann Hypothesis as well as the Grand Simplicity Hypothesis for $L(s,\Delta)$,
which asserts linear independence over $\Q$ of the imaginary parts of all nontrivial zeros of $L(s,\Delta)$
in the upper half plane.
He has pointed out that the sum
$$
S(x)=\sum_{\genfrac{}{}{0pt}{1}{p\le x}{\text{prime}}}\f{\tau(p)}{p^\f{11}2}
$$
has a bias to the positive side, in the sense that the mean of the measure $\mu$ defined by
$$
\f1{\log X}\int_2^Xf\l(\f{\log x}{\sqrt x}S(x)\r)\f{dx}x\to\int_\R f(x)d_\mu(x)\quad 
(x\to\infty)
$$
for $f\in C(\R)$ is equal to 1.
In the proof he closely examines the logarithmic derivative of $L(s,\Delta)$
to find that the second term in its expansion is the cause of the above bias.
While our discussion above deals with the logarithm instead of its derivative,
we have also reached the point that the bias derives from the second term in the expansion.
Although our conclusion has a common cause of the bias with what is given in \cite{S},
our proof is straightforward enough to simplify the proofs.

\subsection{$L$-functions of elliptic curves}
Let $E$ be an elliptic curve over $K$. 
For any finite place $v$ of $K$, we put
$$
a_v=a_v(E)=q_v+1-\#E_v^{\mathrm{ns}}(k_v),
$$
where 
$q_v$ is the cardinal of the residue field $k_v$, and 
$\#E_v^{\mathrm{ns}}(k_v)$ is the number of $k_v$-rational points on the nonsingular locus 
of the reduction $E_v$ of $E$ at $v$.
Defining
$\theta_v\in[0,\pi]$ as $a_v=2q_v^{\frac{1}2}\cos(\theta_v)$,
the $L$-function
$$
L(s,M_E)=\prod_{v:\text{ good}}(1-2\cos(\theta_v)q_v^{-s}+q_v^{-2s})^{-1}
\prod_{v:\text{ bad}}(1-a_vq_v^{-s})^{-1}
$$
satisfies a functional equation for $s\leftrightarrow 1-s$.

By fixing $\ell$ and $K$ with $\ell\ne\mathrm{char}(K)$, we are equipped with a representation 
$$
\rho_E:\ \mathrm{Gal}(K^{\mathrm{sep}}/K)\to \mathrm{Aut}(T_\ell(E)\otimes\Q_\ell),
$$
where $T_\ell(E)=\varprojlim\limits_n E[\ell^n]$ is the $\ell$-adic Tate module of $E/K$.
It holds that $L(s,M_E)=L_K(s,\rho_E)$.
If $\rho_E$ is irreducible and DRH(A) is true for $L_K(s,\rho_E)$,
a bias of the sequence ${a_v}/{\sqrt{q_v}}$ is obtained.
Ulmer \cite{U} proves that  $\rho_E$ is irreducible, when $\mathrm{char}(K)>0$ and $E$ is nonconstant.
By using this result, Kaneko and the second author obtain the following theorem.

\begin{thm}[Kaneko-Koyama \cite{KK2}
]\label{th:elliptic}
Assume $\mathrm{char}(K)>0$ and that $E$ is a non-constant elliptic curve over $K$.
If $\mathrm{rank}(K)>0$, then the sequence ${a_v}/{\sqrt{q_v}}$ has a Chebyshev bias to being negative.
More precisely, the following holds:
\begin{equation}\label{biascor1}
\sum\limits_{q_v\le x}\frac{a_v}{q_v}
= C\log\log x+O(1)
\quad(x\to\infty),
\end{equation}
where $C=\frac12-\mathrm{rank}(K)$.
\end{thm}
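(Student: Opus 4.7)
The plan is to derive the theorem as a direct application of Proposition~\ref{prop:main} to the Artin representation $\rho_E$. Since $\mathrm{char}(K)>0$ and $E$ is non-constant, Ulmer's theorem \cite{U} gives that $\rho_E$ is irreducible, while DRH(A) for $L_K(s,\rho_E)=L(s,M_E)$ is known unconditionally in positive characteristic (Conrad~\cite{Co}, Kaneko--Koyama--Kurokawa~\cite{KKK2}). Hence Proposition~\ref{prop:main} yields
\begin{equation*}
\sum_{N(\mathfrak{p})\le x}\frac{\chi_{\rho_E}(\mathrm{Frob}_{\mathfrak{p}})}{\sqrt{N(\mathfrak{p})}}
=-\Bigl(\frac{\nu(\rho_E)}{2}+m_{\rho_E}\Bigr)\log\log x + c + o(1).
\end{equation*}
At a good place $v$ the Frobenius eigenvalues of $\rho_E$ are $e^{\pm i\theta_v}$, so $\chi_{\rho_E}(\mathrm{Frob}_v)=2\cos\theta_v=a_v/\sqrt{q_v}$, which turns the left-hand side into $\sum_{q_v\le x,\,v\text{ good}}a_v/q_v$. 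The finitely many bad places contribute only $O(1)$ and can be absorbed into the constant.

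Two numerical invariants remain to be identified. For $\nu(\rho_E)$: the symmetry $s\leftrightarrow 1-s$ of the functional equation of $L(s,M_E)$ forces $\det\rho_E=\mathbf{1}$, so $\wedge^{2}\rho_E\cong\mathbf{1}$ and hence $\mathrm{mult}(\mathbf{1},\wedge^{2}\rho_E)=1$. Because $\rho_E$ takes values in $\mathrm{SL}_2$ it is symplectic self-dual, so by Schur's lemma the unique copy of $\mathbf{1}$ in $\rho_E\otimes\rho_E=\mathrm{sym}^{2}\rho_E\oplus\wedge^{2}\rho_E$ must already lie in $\wedge^{2}\rho_E$, giving $\mathrm{mult}(\mathbf{1},\mathrm{sym}^{2}\rho_E)=0$ and therefore $\nu(\rho_E)=-1$. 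For $m_{\rho_E}=\mathrm{ord}_{s=1/2}L_K(s,\rho_E)$: this is the analytic rank of $E/K$, and by the function-field BSD theorem, known for non-constant $E$ in positive characteristic via the Tate conjecture for the associated elliptic surface, it coincides with the Mordell--Weil rank $\mathrm{rank}\,E(K)$. Substituting,
\begin{equation*}
-\Bigl(\frac{\nu(\rho_E)}{2}+m_{\rho_E}\Bigr)=\frac{1}{2}-\mathrm{rank}\,E(K)=C,
\end{equation*}
which is strictly negative as soon as $\mathrm{rank}\,E(K)>0$, producing the claimed Chebyshev bias to being negative.

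The main obstacle is the identification $m_{\rho_E}=\mathrm{rank}\,E(K)$, i.e.\ the equality of analytic and algebraic rank; this is precisely where both the positive-characteristic setting and the non-constancy hypothesis are essential, since one appeals to the Tate conjecture (and finiteness of the Brauer group of the associated elliptic surface) rather than to the unproven number-field BSD. The irreducibility input to Proposition~\ref{prop:main} likewise relies on Ulmer's theorem and on non-constancy. Granted these two facts, the remainder is a routine translation of the character identity into the asymptotic of the theorem, together with the bad-prime bookkeeping.
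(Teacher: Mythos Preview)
Your argument is correct and matches the paper's own sketch: the paper does not give a detailed proof here but simply records that one combines DRH(A), which is known unconditionally when $\mathrm{char}(K)>0$, with Ulmer's results on BSD over function fields, and you have supplied exactly those ingredients through Proposition~\ref{prop:main}, together with the identification $\nu(\rho_E)=-1$ coming from the symplectic self-duality of $\rho_E$. One small overstatement relative to the paper: the full equality $m_{\rho_E}=\mathrm{rank}\,E(K)$ is not known for \emph{every} non-constant $E$ over a function field---only the inequality $m_{\rho_E}\ge\mathrm{rank}\,E(K)$ is unconditional---which is presumably why the paper more guardedly writes ``a part of'' BSD; note however that the inequality already forces $C=\tfrac12-m_{\rho_E}\le-\tfrac12<0$ once $\mathrm{rank}\,E(K)\ge1$, so the qualitative bias statement is unaffected.
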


The proof uses the DRH(A) proved for $\mathrm{char}(K)>0$ 
and a part of the Birch-Swinnerton-Dyer Conjecture proved by Ulmer in \cite{U}
for $\mathrm{char}(K)>0$.

\subsection{Selberg zeta functions}
In positive characteristic cases, the Selberg zeta functions for congruence subgroups of
$PGL(2,\F_q[T])$ attached to nontrivial representations satisfy both DRH(A) and (B) (Koyama-Suzuki \cite{KS}).
This allows us to consider biases of closed paths in the Ramanujan diagrams (i.e. infinite Ramanujan graphs) 
corresponding to the congruence subgroups.

When a Ramanujan diagram $X$ is a cover of $X_0$ of finite degree,
a Chebyshev bias would exist in the set of closed paths in $X_0$ toward those coming from $X$.

For Riemann surfaces with characteristic zero, analogous phenomena are expected to emerge.
For this purpose it is needed to prove DRH(A) for Selberg zeta functions.
Kaneko-Koyama \cite{KK3} proved the convergence of the Euler products of Selberg zeta functions
for arithmetic subgroups of $PSL(2,\R)$ in the whole zero-free region.
Hence in the case of congruence subgroups of $PSL(2,\Z)$, 
the Euler product converges in $\f12<\Re(s)<1$ under the assumption of the Selberg $\f14$-conjecture.
If the convergence region is extended to the boundary, DRH(A) holds and we would be able to obtain
biases of prime geodesics on arithmetic surfaces toward those coming from covering spaces.

\section*{Acknowledgement} This work was supported by JSPS KAKENHI Grant Number JP21K03181.

\newpage
                                                                                           
\appendix{ {\bf Appendix: Numerical Evidence of the Chebyshev Biases } \\    
\begin{center}                                                                          
Miho Aoki, Shin-ya Koyama and Takamasa Yoshida        
\end{center}          
}                        
\renewcommand{\thefigure}{\Alph{section}}
\setcounter{figure}{0}

\maketitle
\begin{abstract}
We give numerical evidences for the facts which were proved in the joint paper of
the first and the second authors \cite{AK} under the assumption of the Deep Riemann Hypothesis (DRH).

\end{abstract}
\section{Introduction}
In this appendix we show numerical evidences for the Deep Riemann Hypothesis (DRH),
by verifying the asymptotics in \cite{AK} numerically.
We fulfilled the calculations for $p\le10^{10}$.

We used the software MATLAB R2021b (Mathworks, Natick, MA) 
and illustrated the results by plotting the points in the following manner for the purpose of suppressing the load to the computer system:
\begin{itemize}
\item Showed all values for $p\le 10^5$.
\item Showed every ten values for $10^5<p\le 10^{10}$.
\end{itemize}

\section{Chebyshev's original case}
Let $q\ge 3$ be an integer. 
Denote by $\pi_{\frac{1}{2}}(x;q,a)$ the sum of the reciprocals of the square roots of primes $p\le x$
satisfying $p\equiv a\pmod q$.
Under the assumption of the Deep Riemann Hypothesis (DRH)
and non-vanishing of the Dirichlet $L$-function $L(s,\chi)$ at $s=1/2$ for any 
nontrivial Dirichlet character $\chi$ modulo $q$,
it follows from \eqref{eq:difference} in \cite{AK} that
\begin{equation}\label{eq:differenceA}
\pi_{\frac{1}{2}} (x;q,b) -\pi_{\frac{1}{2}}(x;q,a)\sim\frac{2^{t-1}}{\varphi (q) }\log{\log{x}}
\quad (x\to \infty)
\end{equation}
with $a$ and $b$ being a quadratic residue and a non-residue modulo $q$, respectively,
where the integer $t$ is defined by $|G/G^2|= 2^{t }$
with $G=\mathrm{Gal}(\Q(\zeta_q)/\Q)$.

We restore Chebyshev's original case by choosing $q=4$.
Denote by $\chi$ the nontrivial character of $G=\mathrm{Gal}(\Q(\sqrt{-1})/\Q)\cong \Z/2\Z$.
Then \eqref{eq:differenceA} tells that DRH for $L(s,\chi)$ implies
\begin{equation}\label{eq:differenceB}
\pi_{\frac{1}{2}} (x;4,3) -\pi_{\frac{1}{2}}(x;4,1)\sim\frac{1}{2}\log{\log{x}}
\quad (x\to \infty).
\end{equation}
The graph of the left hand side of \eqref{eq:differenceB}, which we put as $S(x;4,1)$, is drawn in blue in Figure \ref{fig2}
with the horizontal axis being with the logarithmic scale,
while the right hand side of \eqref{eq:differenceB} is shown in brown.

\begin{figure}[H]
\begin{center} 
\includegraphics[width=7cm]{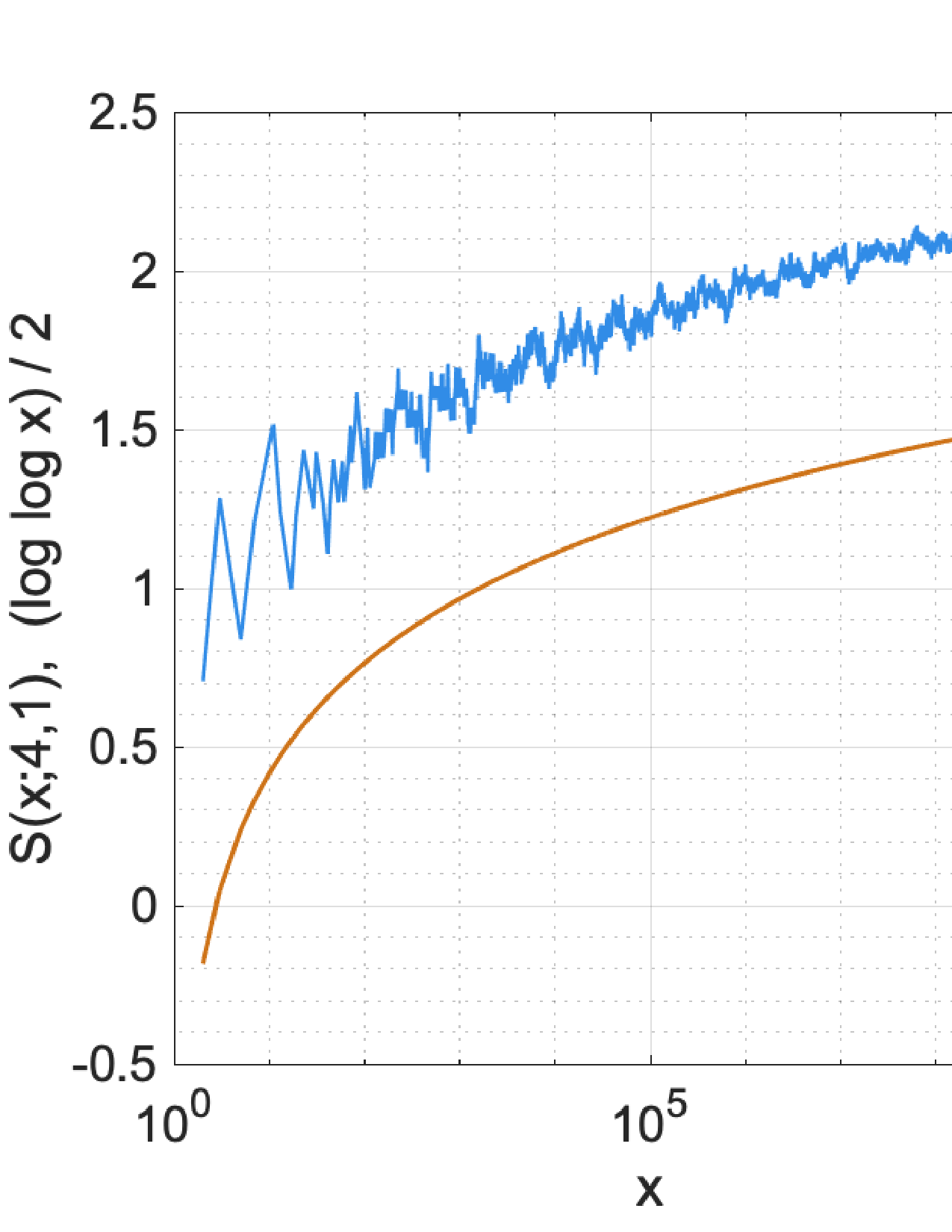}
\end{center}
\caption{Chebyshev's original case}
\label{fig2}
\end{figure}

\section{Other cyclotomic fields}
For abelian extension of global fields $L/K$, we put
\[
\pi_{s,K}(x) := \sum_{N (\mathfrak p)  \leq x} \frac{1}{N(\mathfrak p)^s}.
\]
Assume DRH and $L(1/2,\chi)\ne0$ for any nontrivial Dirichlet character modulo $q$.
It follows from \eqref{eq:cyclo} that under such assumptions
\begin{multline}
\pi_{\frac{1}{2},\Q}(x) -\varphi (q) \pi_{\frac{1}{2}} (x;q,a)  \\
\sim\begin{cases}
\f{2^{t} -1}2 \log{\log{x}}&\text{ ($a$ is a quadratic residue modulo $q$)} \\
-\frac{1}{2}  \log{\log{x}}&\text{(otherwise)}
\end{cases} 
(x\to\infty).\label{C}
\end{multline}

Let $q=60$. 
We have $\varphi (60)=16$ and $t=3$.
There exist two quadratic residues $a=1,49$ and fourteen non-residues $a=7,11,13,17,19,23,29,31,37,$ $41,43,47,53,59$.
Denoting the left hand side of \eqref{C} by $S(x;q,a)$, we show the graphs of 
$S(x;60,1)$ and $S(x;60,7)$ in Figure \ref{fig3} in blue.
The right hand side of \eqref{C} is shown in brown in each graph.
\begin{figure}[H]
\begin{center} 
\includegraphics[width=7cm]{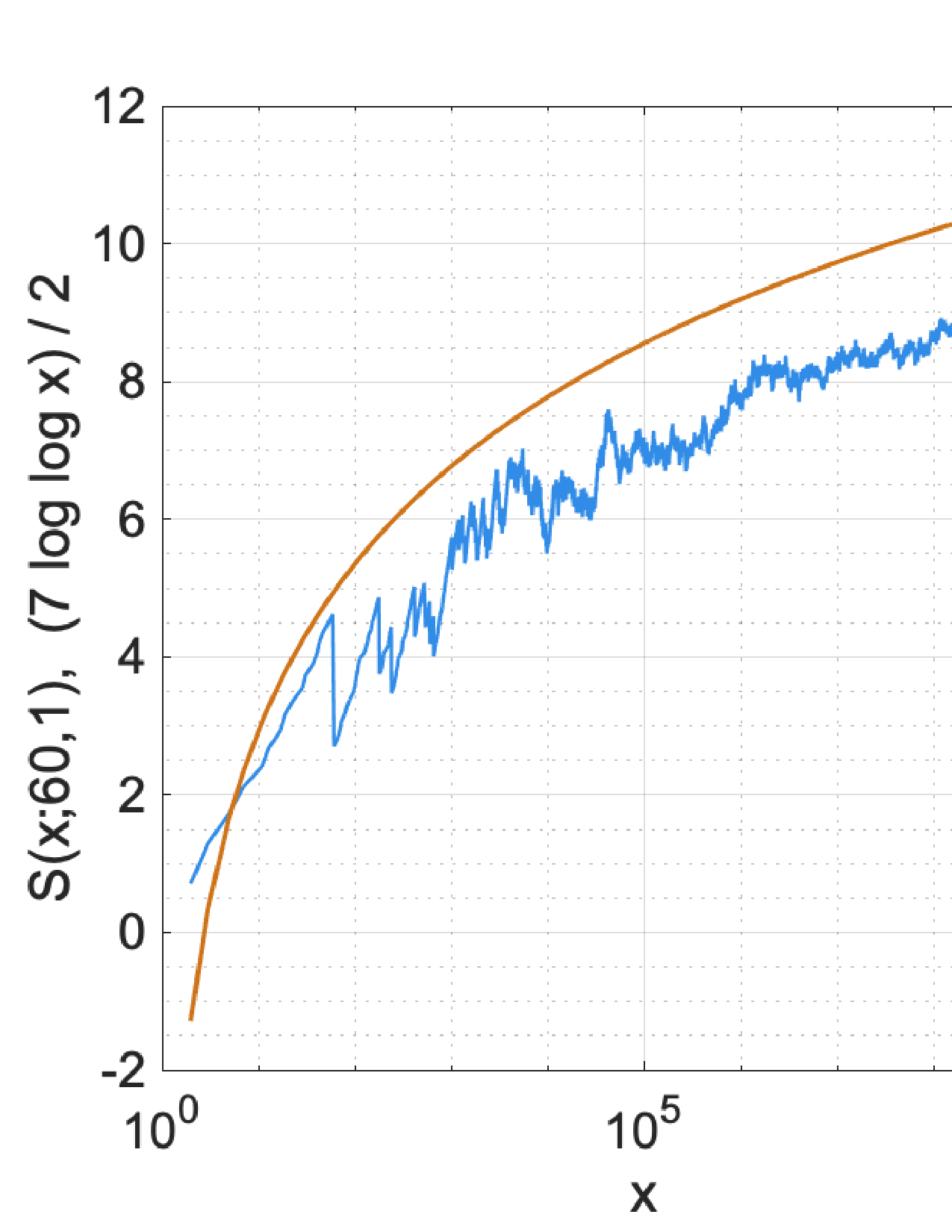} \hspace{0.2cm}
\includegraphics[width=7cm]{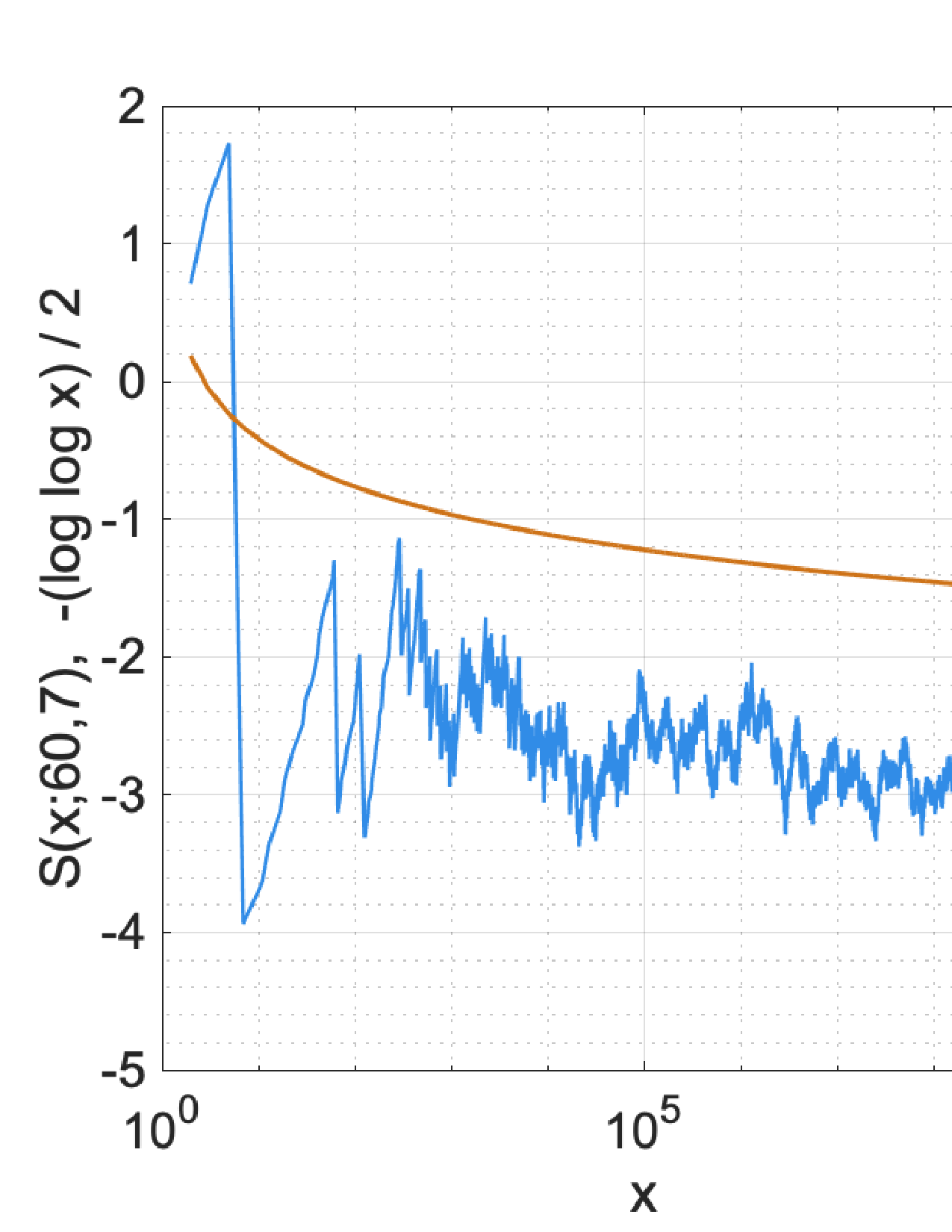}
\end{center}
\caption{Quadratic residue and non-residue modulo 60}
\label{fig3}
\end{figure}

\section{Unbiased cases}   
Each $\sigma \in G:=$Gal$(L/K)$ is equipped with
$$
S_{\sigma} := \left\{ \mathfrak p \ \vline \ \mathfrak p \nmid D_{L/K}, \left( \frac{L/K}{\mathfrak p} \right) =\sigma \right\}
$$
and
$$
\pi_s(x;L/K,\sigma) := \sum_{\mathfrak p \in S_{\sigma } \atop N (\mathfrak p)  \leq x} \frac{1}{N(\mathfrak p)^s}.
$$
We choose $K=\Q$ and $L=\mathbb Q(\zeta_7+\zeta_7^{-1})$, which
is an intermediate field in $\mathbb Q(\zeta _7)/\mathbb Q$ with $[\mathbb Q (\zeta_7):L]=2$ and $[L:\mathbb Q]=3$.
The group $G:=$Gal$(L/\mathbb Q)$ is of order 3, and we put $G:=\{ 1, \sigma, \tau \}$ with
\begin{align*}
S_{1} & := \left\{ p  \ \vline \ p  \ne 7,\ \left( \frac{L/\mathbb Q}{(p)} \right) =1 \right\}= \{ p \ |\ p=\pm 1 \pmod{7} \}, \\
& \\
S_{\sigma} & := \left\{ p  \ \vline \ p  \ne 7,\ \left( \frac{L/\mathbb Q}{(p)} \right) =\sigma \right\}= \{ p \ |\ p=\pm 2 \pmod{7} \}, \\
& \\
S_{\tau} & := \left\{ p  \ \vline \ p  \ne 7,\ \left( \frac{L/\mathbb Q}{(p)} \right) =\tau \right\}= \{ p \ |\ p=\pm 3 \pmod{7} \}.
\end{align*}
Putting  
$
S(x;L/K,\sigma ) :=\pi_{\frac{1}{2},K}(x) -[L:K] \pi_{\frac{1}{2}} (x;L/K,\sigma ),
$
we will draw the graphs of the following three functions
\begin{align*}          
S(x;L/\mathbb Q ,1  )  
&:=\pi_{\frac{1}{2},\mathbb Q}(x) -3 \ \pi_{\frac{1}{2}} (x;L/\mathbb Q,1 ) =\pi_{\frac{1}{2},\mathbb Q}(x) -3 \sum_{p \leq x: \text{prime} \atop p \equiv 1,6 \!\!\!\! \pmod{7} } \frac{1}{\sqrt{p}}\\
S(x;L/\mathbb Q ,\sigma  )  
&:=\pi_{\frac{1}{2},\mathbb Q}(x) -3 \ \pi_{\frac{1}{2}} (x;L/\mathbb Q,\sigma ) =\pi_{\frac{1}{2},\mathbb Q}(x) -3 \sum_{p \leq x: \text{prime} \atop p \equiv 2,5 \!\!\!\! \pmod{7} } \frac{1}{\sqrt{p}} \\
S(x;L/\mathbb Q ,\tau  )  &:=\pi_{\frac{1}{2},\mathbb Q}(x) -3 \ \pi_{\frac{1}{2}} (x;L/\mathbb Q,\tau ) =\pi_{\frac{1}{2},\mathbb Q}(x) -3 \sum_{p \leq x: \text{prime} \atop p \equiv 3,4 \!\!\!\! \pmod{7} } \frac{1}{\sqrt{p}}
\end{align*}
under the assumptions of DRH and $L(1/2,\chi)\ne0$ for any nontrivial Dirichlet character $\chi$ modulo 7.
By Remark \ref{remark} in \cite{AK}, there would exist no biases when the order of the Galois group is odd.
We actually have almost flat graphs in Figure \ref{fig4}.
\begin{figure}[H]
\begin{center}
\includegraphics[width=7cm]{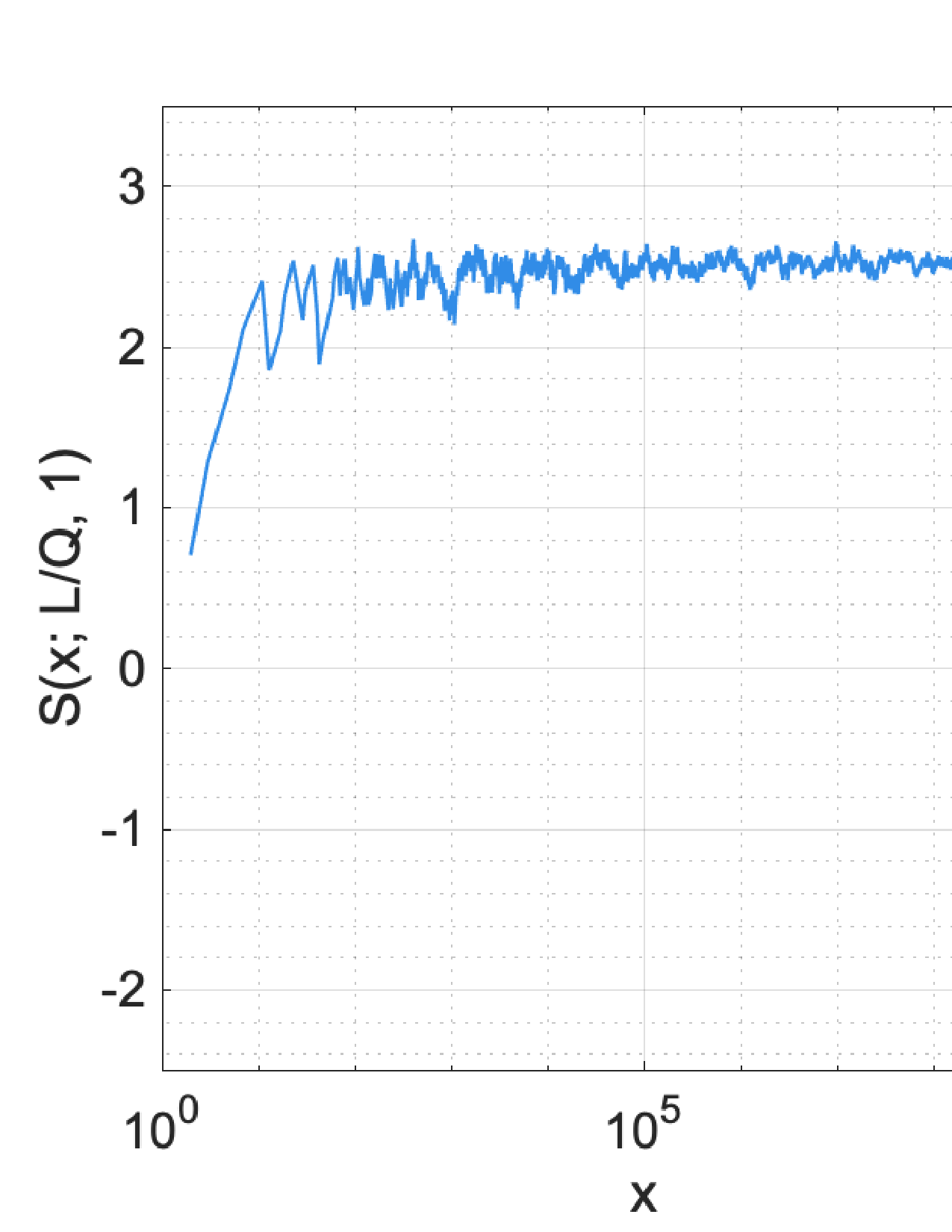}
\includegraphics[width=7cm]{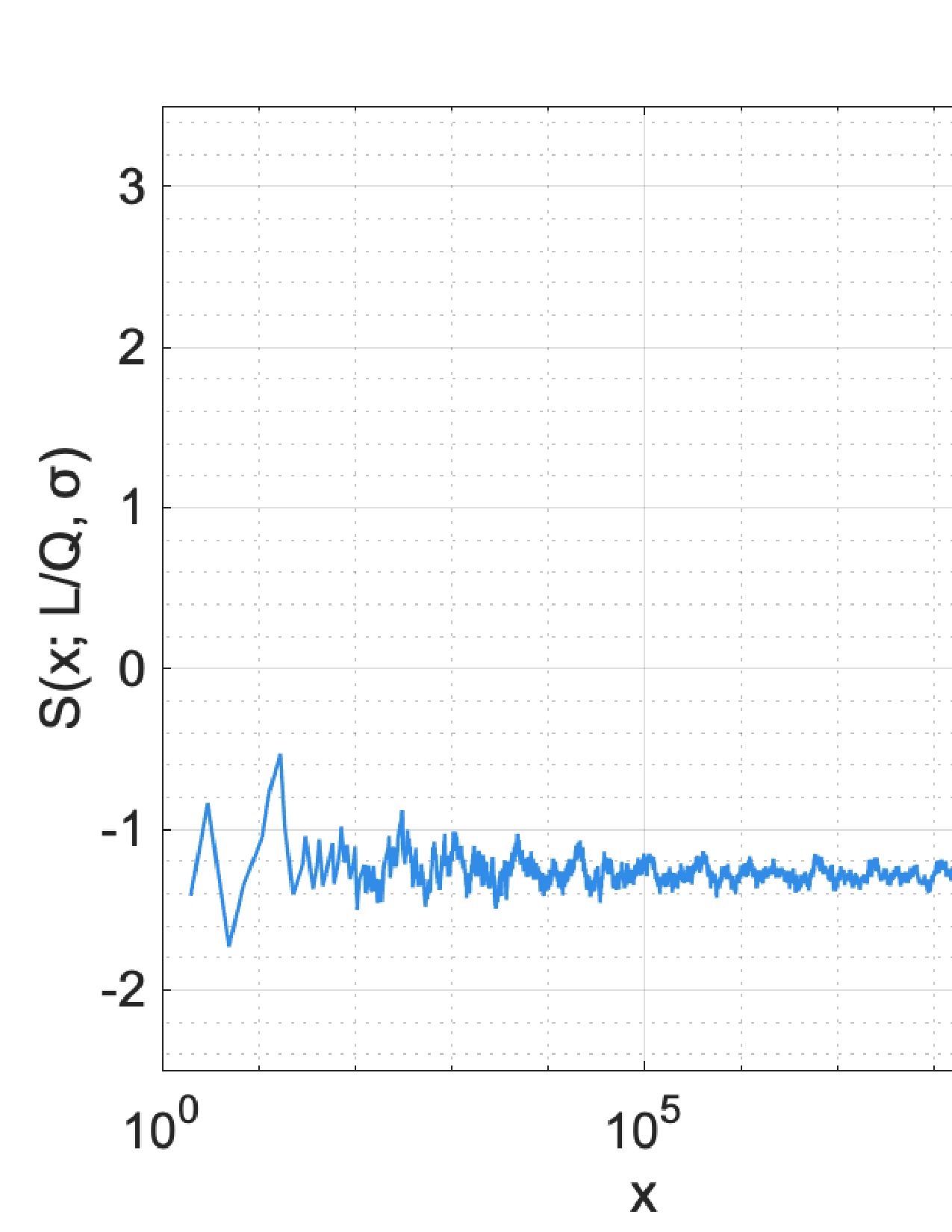}
\includegraphics[width=7cm]{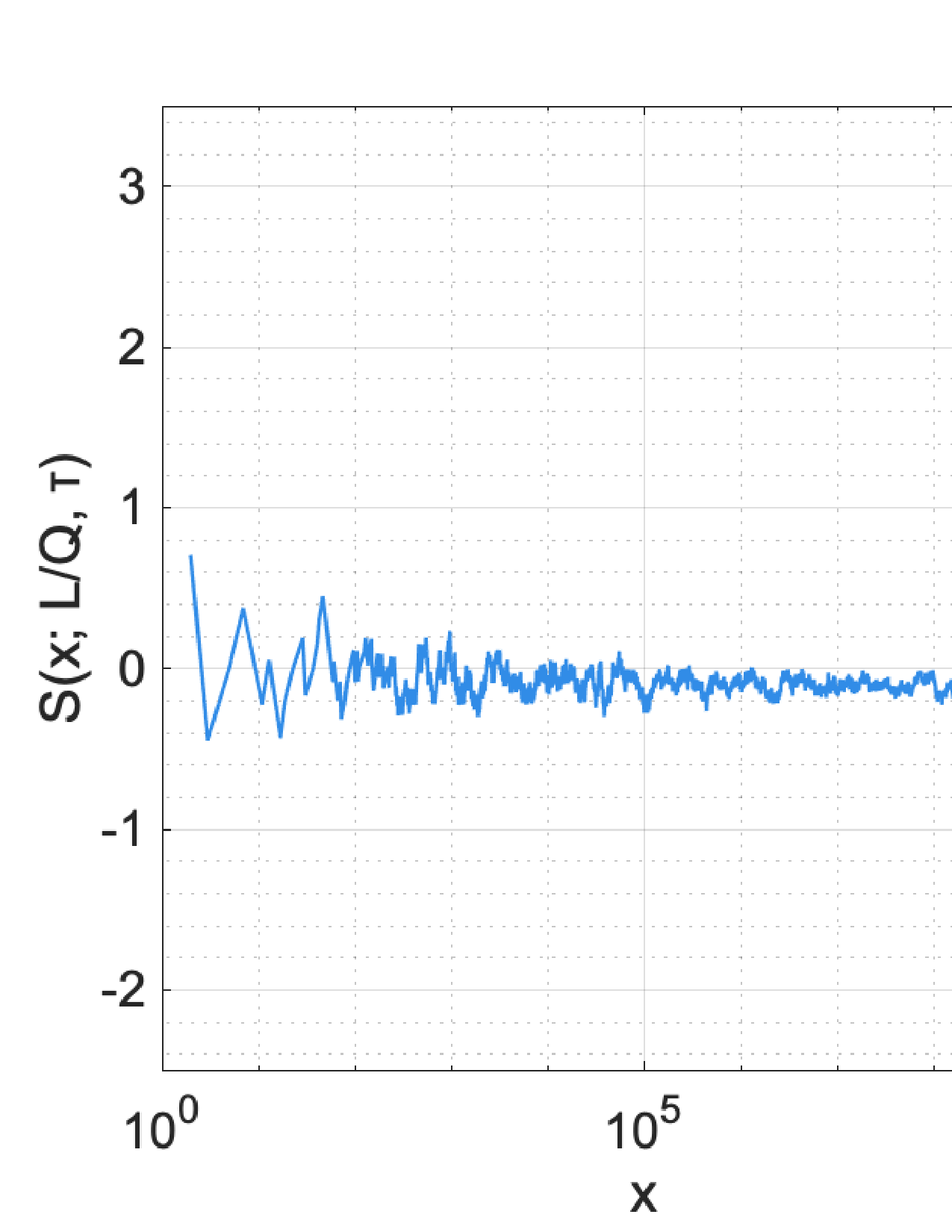}
\end{center}
\vskip -5mm
\caption{Unbiased cases}
\label{fig4}
\end{figure}

\end{document}